\documentclass[11pt]{article}
\usepackage[T1]{fontenc}
\usepackage{lmodern}
\usepackage{amsmath,amsfonts,amssymb,amsthm}

\usepackage{graphicx}
\usepackage{multirow}
\usepackage{longtable,booktabs}
\usepackage{subcaption}
\usepackage{authblk}
\usepackage[a4paper,top=2.45cm,bottom=2.45cm,left=2.7cm,right=2.7cm,marginparwidth=2cm]{geometry}
\usepackage{float}
\usepackage{makecell}

\usepackage{natbib}
 \bibpunct[,]{(}{)}{;}{a}{,}{,}

\usepackage[dvipsnames,table,svgnames]{xcolor}
\usepackage{hyperref}
\hypersetup{
	colorlinks,
	linkcolor = {NavyBlue},
	citecolor = {NavyBlue},
	urlcolor = {NavyBlue}
}

\definecolor{dark-red}{rgb}{0.8,0.0,0.0}

\usepackage[flushleft]{threeparttable}
\usepackage{multirow}
\usepackage{array}
\usepackage[export]{adjustbox}

\usepackage{tikz}

\newtheorem{theorem}{Theorem}

\newtheorem{lemma}[theorem]{Lemma}
\newtheorem{corollary}[theorem]{Corollary}

\theoremstyle{definition}

\theoremstyle{remark}

\providecommand{\keywords}[1]
{\small\textbf{Keywords:} #1}

\title{A Robust Optimization Approach for Scheduling with Uncertain Start-Time Dependent Costs}
\author[1]{Sof\'ia Rodr\'iguez-Ballesteros}
\author[1,2]{Javier Alcaraz\footnote{Corresponding author: \href{mailto:jalcaraz@umh.es}{jalcaraz@umh.es}, ORCID: \href{https://orcid.org/0000-0002-3526-508X}{https://orcid.org/0000-0002-3526-508X}.\\ 
\textit{E-mail addresses:} \href{mailto:sofia.rodriguezb@umh.es}{sofia.rodriguezb@umh.es}, \href{mailto:l.anton@umh.es}{l.anton@umh.es}, \href{mailto:marc.goerigk@uni-passau.de}{marc.goerigk@uni-passau.de}, \href{mailto:dorothee.henke@uni-passau.de}{dorothee.henke@uni-passau.de}.}}
\author[1,2]{Laura Anton-Sanchez}
\author[3]{Marc Goerigk}
\author[3]{Dorothee Henke}

\affil[1]{Centro de Investigación Operativa, Universidad Miguel Hernández de Elche, Spain.}
\affil[2]{Departamento de Estadística, Matemáticas e Informática, Universidad Miguel Hernández de Elche, Spain.}
\affil[3]{Business Decisions and Data Science, University of Passau, Passau, Germany.}

\date{}

\begin{document}
\linespread{1.25}

\maketitle

\begin{abstract}
In this work, we study a single-machine scheduling problem that aims at minimizing the total cost of a schedule subject to start-time dependent costs. This framework naturally captures scenarios where costs fluctuate throughout the day, such as time-varying energy or labor prices. To model more realistic scenarios, we assume that these costs lie within a budgeted uncertainty set and propose a two-stage robust optimization approach. In a first stage, the order in which activities should be executed is decided. After a cost scenario has been revealed, the starting times for each activity are established, subject to the ordering from the first stage. We demonstrate that the proposed problem is NP-hard and not approximable, implying the complexity of its robust counterpart. Furthermore, we show that already evaluating a first-stage solution is NP-hard when the uncertainty set is discrete. We develop models and solution methods for both continuous and discrete budgeted uncertainty. In computational experiments, we compare these approaches and demonstrate the advantages of including uncertainty beforehand.
\end{abstract}

\keywords{machine scheduling; two-stage robust optimization; budgeted uncertainty; start-time dependent costs}

% \newpage

\section{Introduction}

Scheduling problems play a fundamental role in manufacturing, service operations, process industries, and project management, where effective resource allocation and activity sequencing are essential for maintaining operational performance. Typical resources include equipment, raw materials, and labor, while activities range from product transformations and transportation to cleaning and maintenance operations. Scheduling objectives can take many forms, such as minimizing completion times, balancing workloads, or reducing operational costs.

Traditional scheduling approaches usually assume a completely deterministic environment, where key parameters---such as activity durations, resource availability, and constraints---are known in advance. Nonetheless, real-world scheduling is inherently uncertain, leading to frequent deviations from the planned schedule. Factors such as unpredictable weather, volatile markets, and fluctuating resource availability pose significant challenges. For instance, construction projects may experience delays due to adverse weather conditions, while manufacturing processes can be affected by material shortages. Similarly, service industries often face unexpected fluctuations in demand. Given these complexities, static deterministic scheduling approaches have been increasingly criticized (\citealp{Goldratt1997}), as they fail to account for uncertainty effectively. As a result, recent research has focused on integrating uncertainty into model parameters to better understand and mitigate its impact.

Numerous approaches have been proposed to address scheduling under uncertainty, and several dedicated typologies can be found in the literature (see, e.g., \citealp{suresh1993dynamic}; \citealp{Sabuncuoglu1999}; \citealp{chaari2014}). Notably, \cite{Herroelen2005} classify scheduling under uncertainty into five main approaches: reactive scheduling, stochastic scheduling, fuzzy scheduling, robust (proactive) scheduling, and sensitivity analysis. These methods fall under the broader categories of reactive or preventive scheduling. Reactive scheduling focuses on real-time adjustments, modifying an existing schedule during execution in response to unexpected disruptions such as order cancellations or machine breakdowns. In contrast, preventive scheduling seeks to anticipate uncertainty by incorporating historical data and forecasting techniques to model expected variability in processing times, demand, or costs. While adjustments may still be required over time, preventive scheduling provides a structured planning framework. Among preventive methods, we distinguish stochastic-based approaches, robust optimization methods, fuzzy programming, and sensitivity analysis.

In this work, we consider a  single-machine scheduling problem in which costs vary over a fixed time horizon and are affected by uncertainty. We first introduce our nominal problem, which is deterministic and, to the best of the authors’ knowledge, has not been addressed in the existing literature. The objective of this problem is to minimize the total cost of a schedule, subject to varying start-time dependent costs, i.e., the cost of initiating an activity varies over the time horizon. This framework captures real-world scenarios where operating costs---such as energy prices, labor wages, or disruption penalties---fluctuate over time. Examples include energy-aware manufacturing under time-of-use tariffs, infrastructure maintenance during varying traffic cycles, or satellite communications within orbital windows. In such environments, identifying an activity sequence that exploits lower-cost time windows constitutes a key operational challenge. We then extend this nominal problem to a two-stage robust scheduling problem by taking cost uncertainty into account. In the first stage, the decision maker only needs to determine the order in which activities shall be planned. Once the cost scenario has been revealed, the decision maker then determines the specific start time for each activity, while respecting the order that was fixed beforehand. Following \cite{Bertsimas2004}, we consider budgeted uncertainty sets, where we distinguish between two types: a discrete variant, which considers a finite set of cost scenarios, and a continuous variant, which allows convex combinations of such realizations.

This two-stage formulation arises whenever a sequence of activities must be determined in advance, while their specific start times allow for some flexibility. As an example, in airport scheduling (\citealp{ikli2021aircraft}), aircraft physically line up at the departure runway, which determines their departure order. The specific departure times, however, allow for some flexibility to adjust to the current circumstances. Similarly, consultation times---such as scheduled appointments with medical professionals---can often be delayed, but customers observe their order of arrival and consider changes in this sequence as unfair. Two-stage formulations similar to the one we adopt have also been explored in the project scheduling literature under uncertainty (see, e.g., \citealp{Artigues2013}; \citealp{BRUNI2017}; \citealp{BOLD2021}), where the sequence of activities or resource allocation decisions are fixed in the first stage, followed by scheduling adjustments in the second stage. For a recent overview of two-stage optimization approaches in this context, we refer to \cite{HAZIR2020}.

In robust scheduling, the general goal is to construct a baseline schedule that remains resilient against disruptions by ensuring good performance across all scenarios within a predefined uncertainty set. This approach aims to provide adaptability to varying conditions without relying on prior probabilistic assumptions. We refer to \cite{Herroelen2004} for a comprehensive understanding of the various procedures involved in generating robust schedules, as well as insights into the differences compared to reactive scheduling. Robust optimization methods have been widely applied to scheduling problems; examples include scheduling with budgeted uncertainty \citep{bougeret2019robust}, risk-averse decision criteria \citep{kasperski2019risk}, recoverable robustness \citep{bold2022investigating,bold2024recoverable}, and anchor-robustness \citep{bendotti2023anchor}. Chapter~9 of the book by \cite{robook} presents a recent overview. For an in-depth look at robust optimization in general, we refer to the books by \cite{ben2009} or \cite{bookbertsimasdenHertog2022}. In particular for single-machine scheduling problems, robust optimization has been used for a wide range of combinations between objective functions and uncertainty sets, e.g., \cite{yang2002robust,mastrolilli2013single,tadayon2015algorithms}.

Recall that, in our two-stage setting, once the scenario costs have been revealed, we fix starting times subject to an order in which activities need to be processed. As we show, this is closely related to a well-known project scheduling problem (PSP) with start-time dependent costs. This classical problem has received considerable attention in the literature (see, e.g., \citealp{Groflin}; \citealp{ChangEdmons}; \citealp{Roundy}; \citealp{Sankaran}). In particular, \cite{minimumCut} demonstrate that the PSP with start-time dependent costs can be solved in strongly polynomial time by transforming it into a minimum cut problem in a directed graph. This fact allows us to reformulate our two-stage robust model into a single-stage formulation.

Unlike traditional single-machine scheduling, which typically aims to minimize lateness costs or maximize earliness profits, our problem focuses on minimizing the total start-time dependent costs. These costs vary based on when an activity begins and are commonly found in project scheduling problems. Our single-machine scheduling problem  where the sequence of activities must be determined (see also, e.g., \citealp{baker1974}; \citealp{Pinedo2016}; \citealp{Shabtay2023}) might thus be seen as a special case of the resource-constrained PSP with a unit capacity-resource (see, e.g.. \citealp{Cauwelaert2020}; \citealp{Riedler2020}; \citealp{LIU2023}). 

This paper makes the following contributions. First, we introduce a novel single-machine scheduling formulation that considers start-time dependent costs and prove that this problem is NP-hard. We show that start-time dependent costs can effectively represent cumulative execution costs, which we use to establish our complexity results. Second, we present a new compact reformulation of the two-stage robust framework to handle continuous budgeted uncertainty. For discrete budgeted uncertainty, we demonstrate that the problem is more complex by proving NP-hardness of the adversarial problem. However, for this setting, we develop an iterative solution procedure inspired by \cite{zeng2013solving}. Third, we develop strengthening strategies for the exact robust approaches. For the compact formulation, we add valid inequalities, enforce transitivity dynamically, and propose warm-start schemes, which significantly improve root bounds, convergence, and final optimality gaps, thereby extending the range of instances solvable within the time limit. We transfer key ideas to the iterative discrete method as well, yielding faster convergence in moderately difficult cases. Finally, our computational study quantifies these gains and shows that the compact approach---especially in its strengthened variants---offers the best overall scalability and robustness--performance trade-off, remaining competitive and in common settings even superior under discrete adversarial evaluation.

The remainder of the paper is structured as follows: Section~\ref{sec2} introduces our novel problem, including a formal description of the robust setting employed in this study. This section presents both uncertainty sets and provides a detailed explanation of the two-stage robust approach. In Section~\ref{robust}, we present the proposed compact formulation for our problem under continuous budgeted uncertainty. Section~\ref{discrete} elaborates on the discrete scenario case, detailing the NP-hardness of the adversarial problem as well as the iterative method for solving the two-stage problem. Section~\ref{results} is dedicated to the presentation and analysis of the conducted experiments and their findings. Additional technical material and supplementary results are provided in Appendix~\ref{A}. Concluding remarks are presented in Section~\ref{conclusion}.

\section{Problem description}
\label{sec2}

This section formally introduces the scheduling problem addressed in this work. We begin by describing the nominal version of the problem and analyzing its computational complexity. Next, we incorporate uncertainty into the cost structure and propose a two-stage robust optimization approach to address the resulting problem.

\subsection{Nominal problem}
\label{nominal}

We study the following nominal problem, that is, our baseline problem which will be then extended by including uncertainty. Let a set $J$ of activities and a finite time horizon $\mathcal{T}=\{0,\ldots,T-1\}$ be given. Each activity $j\in J$ has an integer duration $d_j$. Furthermore, we are given a cost matrix $\textbf{c}$, where each entry $c_{jt}$ denotes the cost incurred by starting activity $j \in J$ at time $t \in \mathcal{T}$. Note that this formulation assumes that the entire cost matrix $\textbf{c}$ is part of the input, and hence $T$ is polynomial in the input size. This differs from most other scheduling problems, where the time horizon can be exponential in the input size.

Each activity needs to be scheduled to start at some point in time without preemption. No two activities can run in parallel. The goal is to minimize the sum of all starting costs. By introducing variables
$$ x_{jt}=\begin{cases}
1 & \text{if activity $j$ starts at time $t$,}\\
0 & \text{otherwise,}
\end{cases} $$
$\forall j\in J$ and $t\in\mathcal{T}$, a straight-forward integer programming formulation of the nominal problem is then as follows:
\begin{align}
& \text{minimize}
&& \sum_{j\in J} \sum_{t\in\mathcal{T}} c_{jt}x_{jt} && \label{(1)} \\
& \text{subject to}
&& \sum_{t \in \mathcal{T}} x_{jt} = 1, && \quad \forall j \in J, \label{(2)} \\
&&& d_jx_{jt} + \sum_{j\neq i\in J}\sum_{s=t}^{t+d_j-1} x_{is} \leq d_j, && \quad \forall j\in J, \ t\in\mathcal{T}, \label{(3)} \\
&&& x_{jt} \in \{0, 1\}, && \quad \forall j \in J, \ t \in \mathcal{T}. \label{(4)}
\end{align}
The objective function is the sum of all starting costs. By Constraints~\eqref{(2)}, it is ensured that each job starts once within the time horizon. Constraints (\ref{(3)}) ensure that once an activity is scheduled, it must be completed before another can begin, so that no two activities can overlap in execution. Note that this formulation resembles clique-based time-indexed models used in single-machine scheduling, where non-overlapping time intervals form cliques (see, e.g., \cite{cliqueFormulation} for exact algorithms in earliness-tardiness scheduling). However, unlike traditional approaches that focus on earliness-tardiness penalties, our model explicitly minimizes the sum of start-time dependent costs, adding a unique dimension to the problem. Furthermore, note that in this formulation, an activity may start late in the time horizon, such that the sum of its start time and duration exceeds $T-1$. For ease of notation, we allow this possibility. If undesired, additional constraints can be included to impose bounds on the earliest and latest starting times of each activity.

Note that the start-time dependent costs, $c_{jt}$, in our model can be motivated by scenarios where a specific cost, $w_{jt}$, must be paid whenever activity $j$ is active at time $t$, referred to as execution-time dependent costs. Such costs naturally arise, for example, in energy-intensive production or computing environments, where activities consume power while running and electricity prices vary over time. In such cases, the total cost of starting activity $j$ at time $t$ is simply the aggregate cost over its duration: $c_{jt} = \sum_{s=t}^{t + d_j -1} w_{js}$. This relationship allows us to use execution-time costs as a theoretical tool to prove the complexity of our problem while maintaining a general start-time dependent formulation for our robust models.

At this point, the question arises if this new problem is solvable in polynomial time. Note that the analysis of the problem complexity must take into account that the matrix $\textbf{c}$ is considered part of the input. The following result demonstrates the complexity of the nominal problem.

\begin{theorem}\label{prop:hardness} 
The nominal problem with execution-time dependent costs is NP-hard and not approximable in polynomial time, unless $P=NP$.
\end{theorem}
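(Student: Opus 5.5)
The plan is a gap-producing reduction from a strongly NP-complete packing problem whose yes-instances correspond to schedules of cost $0$ and whose no-instances force strictly positive cost. Since the problem is NP-hard to distinguish optimal value $0$ from optimal value $>0$, no polynomial-time algorithm with any multiplicative approximation guarantee can exist unless $P=NP$. Because $T$ must be polynomial in the input size (the matrix $\textbf{c}$ is part of the input), the source problem has to be strongly NP-hard, so numbers can be encoded in unary. I would use 3-PARTITION: given $3m$ integers $a_1,\dots,a_{3m}$ with $B/4<a_i<B/2$ and $\sum_i a_i = mB$, decide whether they split into $m$ triples each summing to $B$.

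\textbf{Construction.} Create one activity $j_i$ with duration $d_{j_i}=a_i$ for each $i$. Create $m-1$ separator windows by setting the horizon to $T=mB+(m-1)$. The time slots are arranged as $m$ blocks of length $B$, separated by single slots $t_k = k(B+1)-1$ for $k=1,\dots,m-1$. Set the execution-time costs to $w_{jt}=1$ for every activity $j$ and every separator slot $t_k$, and to every time $t\ge T$ (to penalize overrunning the horizon; alternatively we could just forbid those starts via the latest-start bounds mentioned in the text). Set $w_{jt}=0$ otherwise. Then $c_{jt}=\sum_{s=t}^{t+d_j-1}w_{js}$ as defined above, and the instance is of size polynomial in $m$ and $B$, hence polynomial for the unary-encoded, strongly NP-hard 3-PARTITION.

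\textbf{Correctness.} If a 3-partition exists, schedule the three activities of triple $k$ consecutively inside block $k$. They fill it exactly, touch no separator slot, and the total cost is $0$. Conversely, suppose a schedule has cost $0$. Then no activity covers a separator slot or exceeds the horizon, so each activity lies entirely inside one block. The total duration is $mB$, which equals the total block capacity, and activities cannot overlap, so every block is filled exactly. Hence the activities in each block sum to $B$, and the bounds $B/4<a_i<B/2$ force exactly three per block, giving a 3-partition. This shows NP-hardness.

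\textbf{Inapproximability and the main obstacle.} Suppose an $\alpha$-approximation algorithm existed for some $\alpha$, possibly a polynomial-time computable function of the input size. It would return cost $0$ exactly on yes-instances, because $\alpha\cdot 0=0$, and would therefore decide 3-PARTITION. The main point requiring care is the encoding issue: $T$ must stay polynomial, which is why I use a strongly NP-hard source problem rather than PARTITION. A second, minor point is handling activities that overrun the horizon, which I address by the high costs on $t\ge T$ or by explicit start-time bounds.
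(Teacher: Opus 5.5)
Your proposal is correct and follows the paper's proof essentially step for step. It uses the same 3-partition reduction with $d_j=a_j$, unit execution costs on single separator slots between blocks of length $B$, and the fact that cost $0$ is attainable if and only if a 3-partition exists, which yields both NP-hardness and inapproximability. The only cosmetic difference is how you handle the ends of the horizon. The paper takes $T=mA+m+1$ and also places separators at $t=0$ and $t=T-1=m(A+1)$, so any activity overrunning the horizon automatically covers a cost-$1$ slot. You instead use only the $m-1$ internal separators and put cost $1$ on all $t\ge T$; both choices work.
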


\begin{proof}
Let an instance of the strongly NP-hard 3-partition problem (see \cite{GJ79}) be given. It consists of a list $S$ of integers $a_1,\ldots,a_{3m}$ with $A=\sum_{i=1}^{3m} a_i/m$ and $A/4 < a_i < A/2$ for all $i\in\{1,\ldots,3m\}$. The task is to find a partition into $m$ triplets that all have the same sum.

We construct an instance of our scheduling problem as follows. We define activities $J = \{j_1, j_2,\ldots, j_{3m}\}$ with durations $d_j = a_j$. The time horizon is defined by $T=\sum_{j=1}^{3m} d_j + m + 1$. Finally, the execution-time dependent costs are defined as
\[ w_{jt} =\begin{cases}
    1 & \text{if } t = n(A+1) \text{ for some } n\in\mathbb{N}_{\ge 0}, \\
    0 & \text{otherwise.}
    \end{cases}
\]

Figure~\ref{fig:exampleproof} presents an example for $S=(3,3,3,3,4,4)$. We have $m=2$ and $A=10$. Each square represents a time slot. Black squares are those where $w_{jt}=1$. An optimal solution is to start activities $j_1$, $j_2$, and $j_5$ in the first 10 squares with costs zero, and start activities $j_3$, $j_4$, and $j_6$ in the remaining 10 squares with costs zero.

\begin{figure}
  \begin{center}
    \begin{tikzpicture}
      \newcommand{\drawsquares}[1]
      {
        \draw[step=0.5,thick] (0,0) grid (#1 / 2, 0.5);
      }
      \newcommand{\drawjob}[2]
      {
        \node at (-0.25,0.25) {$j_{#1}$};
        \drawsquares{#2}
      }

      \newcommand{\drawtimehorizon}
      {
        \drawsquares{22}
        \foreach \i in {0,...,22}
        {
          \node at (\i / 2 + 0.25, 0.75) {\small$\i$};
        }
        \foreach \i in {0,11,22}
        {
          \draw[fill=black] (\i / 2,0) rectangle (\i / 2 + 0.5,0.5);
        }
      }

      \begin{scope}
        \drawjob{1}{3}
      \end{scope}
      \begin{scope}[shift={(0,-1)}]
        \drawjob{2}{3}
      \end{scope}
      \begin{scope}[shift={(3,0)}]
        \drawjob{3}{3}        
      \end{scope}
      \begin{scope}[shift={(3,-1)}]
        \drawjob{4}{3}
      \end{scope}
      \begin{scope}[shift={(6,0)}]
        \drawjob{5}{4}
      \end{scope}
      \begin{scope}[shift={(6,-1)}]
        \drawjob{6}{4}
      \end{scope}
      \begin{scope}[shift={(-2,-2.5)}]
        \drawtimehorizon{}
      \end{scope}
    \end{tikzpicture}
\caption{Example illustrating the reduction in the proof of Theorem~\ref{prop:hardness}.}\label{fig:exampleproof}
\end{center}
\end{figure}

It follows that a schedule with costs 0 exists if and only if it is possible to partition the activities into $m$ sets $J_1,\ldots,J_m$ with $\sum_{j\in J_i} d_j = A$ for each $i \in \{1, \dots, m\}$. It remains to show that this reduction is of polynomial time and space in the input size. Note that there are $3m \cdot T$ cost coefficients that need to be defined, where $T$ is not polynomial in the binary encoding size of the input values $a_i$. However, as the 3-partition problem is strongly NP-hard, we may assume each $a_i$ to be encoded in unary, which means that the reduction remains polynomial.

Finally, the inapproximability claim follows from the fact that the 3-partition instance is a yes-instance if and only if a solution to the scheduling instance exists that has costs 0.
\end{proof}

Due to the relationship between start-time dependent costs and execution-time dependent costs explained above, Theorem~\ref{prop:hardness} directly implies:

\begin{corollary}
    The nominal problem with start-time dependent costs is NP-hard and not approximable in polynomial time, unless $P = NP$.
\end{corollary}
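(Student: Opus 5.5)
The plan is to reduce the start-time dependent problem to the execution-time dependent problem of Theorem~\ref{prop:hardness} by a polynomial-time, cost-preserving transformation. Given any instance with execution-time dependent costs $w_{jt}$, we construct a start-time instance with the same activities, durations and horizon $\mathcal{T}$. For every $j\in J$ and $t\in\mathcal{T}$ we set
\[ c_{jt}=\sum_{s=t}^{\min\{t+d_j-1,\,T-1\}} w_{js}. \]
The truncation at $T-1$ handles activities that run past the horizon; alternatively we can extend $w$ by zeros beyond $T-1$. In the 3-partition instance used in the proof of Theorem~\ref{prop:hardness} the horizon already has room, so no choice of convention changes anything. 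Computing $\textbf{c}$ takes $O(|J|\cdot T\cdot \max_j d_j)$ arithmetic operations. This is polynomial, since $T$ is polynomial in the input size (the cost matrix is part of the input) and $d_j\le T$.

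The key step is that the two instances have identical feasible sets. The constraints \eqref{(2)}--\eqref{(4)} do not involve costs, so every feasible schedule of one instance is feasible for the other. We then check that every feasible schedule has the same objective value in both. By construction, starting $j$ at $t$ costs exactly the sum of its execution costs over the slots it occupies, so the objectives coincide term by term. Consequently the optimal values are equal, and optimal solutions correspond one-to-one under the identity map.

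Hardness and inapproximability then transfer directly. Suppose a polynomial-time algorithm solved the start-time dependent problem, or approximated it within any factor. Applying it to the transformed instance would do the same for the execution-time dependent problem, contradicting Theorem~\ref{prop:hardness} unless $P=NP$. For the inapproximability part, the argument there relies on distinguishing optimal value $0$ from positive value. The transformation preserves objective values exactly, so the $0$ versus positive gap is kept. Nonnegativity of $w$ carries over to $c$, so the approximation ratio stays meaningful.

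There is no real obstacle. The only points to watch are the boundary convention for activities running past $T-1$, and confirming that the size of $\textbf{c}$ and its encoding stay polynomial. The latter holds because each $c_{jt}$ is a sum of at most $T$ entries of $w$, and in the reduction these entries are $0$/$1$ values.
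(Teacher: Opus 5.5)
Your proposal is correct and is the paper's argument. The paper obtains the corollary from the same cost-preserving map $c_{jt}=\sum_{s=t}^{t+d_j-1} w_{js}$ of Section~\ref{nominal}: it keeps the feasible schedules and their objective values, so NP-hardness and the $0$-versus-positive inapproximability gap of Theorem~\ref{prop:hardness} carry over directly. Your extra care with the horizon boundary is harmless in the 3-partition instance, since the paper's $w$ vanishes on the $A$ slots after $T-1$ while $d_j<A/2$, so truncating or not yields the same $\textbf{c}$ (and any activity running past $T-1$ covers the cost-$1$ slot $T-1$ anyway).
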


\subsection{Robust setting}
\label{uncertainty}

We now extend the nominal scheduling problem to a two-stage robust setting, where we assume that the cost matrix $\textbf{c}$ is uncertain. In the first stage, we do not determine the complete schedule (i.e., the starting time of each activity), but only the order in which activities are to be processed. Once the actual costs are revealed, the second stage assigns starting times to each activity, subject to the previously fixed ordering. Note that the nominal problem described in Section~\ref{nominal} can be recovered by assuming that both stages are solved simultaneously under perfect information. In this sense, the robust model extends the nominal formulation by separating sequencing and timing decisions, while incorporating uncertainty in start-time dependent costs.

More formally, let $y_{ij}$ be a binary variable that models if activity $i$ precedes activity $j$. To obtain a complete ordering, we use the constraints
\begin{align}
& y_{ii} = 0 & \forall i\in J\label{eq:ord0} \\
&y_{ij} + y_{ji} =1, & \forall i,j\in J, i\neq j\label{eq:ord1}\\
&y_{ij} + y_{jk} \le 1 + y_{ik}, & \forall i,j,k \in J, \text{pairwise different}. \label{eq:ord2}
\end{align}
Let $\mathcal{Y} = \{ \textbf{y} \in\{0,1\}^{J \times J} :  \eqref{eq:ord0}, \eqref{eq:ord1}, \eqref{eq:ord2} \}$. Furthermore, let $P(\textbf{y}) = \{ (i,j) \in J \times J : y_{ij} = 1\}$ for a given $\textbf{y}\in \mathcal{Y}$. The set of schedules that adhere to the precedence relations implied by $\textbf{y}$ is then defined as follows:
\begin{align*}
\mathcal{X}(\textbf{y}) = \Big\{ \textbf{x}\in\{0,1\}^{J\times \mathcal{T}} : \ & \sum_{t\in \mathcal{T}} x_{jt} = 1, & \forall j\in J,\phantom{\Big\}.} \\
& \sum_{s=t}^{T-1} x_{is} + \sum_{s=0}^{t+d_i-1} x_{js} \leq 1, & \forall (i,j)\in P(\textbf{y}),\ t\in\mathcal{T} \Big\}.
\end{align*}
Let $\mathcal{U}$ be an uncertainty set, which contains all scenarios $\textbf{c}$ against which we wish to immunize our solution. The two-stage robust optimization approach that we study is to solve
\[ \min_{\textbf{y}\in\mathcal{Y}} \ \max_{\textbf{c}\in\mathcal{U}} \ \min_{\textbf{x}\in\mathcal{X}(\textbf{y})} \ \sum_{j\in J} \sum_{t\in\mathcal{T}} c_{jt} x_{jt}. \]
Note that, if $\mathcal{U}$ consists of a single scenario, we recover the NP-hard nominal problem.

To define the uncertainty set, we follow the approach of budgeted uncertainty (\citealp{Bertsimas2004}). We assume that there is an interval $[\underline{c}_{jt},\underline{c}_{jt}+\hat{c}_{jt}]$ of possible realizations for the cost of starting activity $j$ at time $t$. In particular, we denote by $\delta_{jt}$, $j\in J, t\in\mathcal{T}$ the increase variables representing the scaled deviation from the lower bound $\underline{c}_{jt}$. We further assume that there is a budget $\Gamma$ on the total sum of these variables $\delta_{jt}$. That is, we define the continuous budgeted uncertainty set to be
    \begin{equation}\label{ContBU}
    \mathcal{U}_{C}(\Gamma) = \bigg\{\textbf{c}\in\mathbb{R}^{J \times \mathcal{T}}: c_{jt} = \underline{c}_{jt} + \hat{c}_{jt} \delta_{jt},\ \delta_{jt}\in [0,1],\  \sum_{j\in J}\sum_{t\in\mathcal{T}}\delta_{jt}\leq\Gamma \bigg\},
    \end{equation}
and the discrete budgeted uncertainty set as
    \begin{equation}\label{DiscBU}
    \mathcal{U}_{D}(\Gamma) = \bigg\{\textbf{c}\in\mathbb{R}^{J \times \mathcal{T}}: c_{jt} = \underline{c}_{jt} + \hat{c}_{jt} \delta_{jt},\ \delta_{jt}\in \{0,1\},\  \sum_{j\in J}\sum_{t\in\mathcal{T}}\delta_{jt}\leq\Gamma \bigg\}.
    \end{equation}  
If $\Gamma$ is an integer, it is well-known that both uncertainty sets are equivalent for one-stage robust problems. This is in general not true for two-stage problems: The adversary (i.e., the fictitious player solving the inner maximization problem) potentially has an advantage if costs of more than $\Gamma$ items are increased (\citealp{robook}).

We close this section with a discussion of extensions of this setting. The ``attack'' possibilities for an adversary in the above definition (i.e., the decision where to increase costs) are restricted to combinations of items and time slots. For example, it does not allow the adversary to perform one cost increase that affects all activities starting at a specific time, or all starting times of a specific activity. To this end, we can use scenarios of the form
\[ c_{jt} =  \underline{c}_{jt} + \hat{c}_{jt} \delta_{jt} + \tilde{c}_{jt} \tilde{\delta}_t + \bar{c}_{jt} \bar{\delta_j}, \]
with $\sum_{j\in J} \sum_{t\in\mathcal{T}} \delta_{jt} + \sum_{t\in\mathcal{T}} \tilde{\delta}_t + \sum_{j\in J} \bar{\delta_j} \le \Gamma$. Furthermore, it can also be extended to execution-time dependent uncertainty, by using
\[ c_{jt} = \sum_{s=t}^{t + d_j -1} \left( \underline{w}_{jt} + \hat{w}_{jt} \delta_{jt} + \tilde{w}_{jt} \tilde{\delta}_t + \bar{w}_{jt} \bar{\delta_j} \right). \]
To avoid additional notation, our presentation is restricted to the base case defined as $\mathcal{U}_{C}(\Gamma)$ and $\mathcal{U}_{D}(\Gamma)$, but can be extended to these more general settings as well.

\section{Proposed Methodology}\label{methodology}

This section outlines the exact approaches proposed for the two-stage robust single-machine scheduling problem introduced in Section~\ref{sec2}. We first derive a compact mixed-integer reformulation for continuous budgeted uncertainty by exploiting integrality and duality in the second-stage start-time assignment. We then address discrete budgeted uncertainty, for which we show that the adversarial problem is NP-hard and propose an iterative scenario-generation approach.

\subsection{Compact formulation for continuous budgeted uncertainty}
\label{robust}

We first recall the closely related PSP with start-time dependent costs as studied in \cite{mohring2001project,minimumCut}. We are given a set of precedence relations $P \subseteq J \times J$, where $(i,j) \in P$ means that activity $i$ must be scheduled before activity $j$. The task is to find a starting time for each activity, so that the total sum of start-time dependent costs is minimized, where no preemption of activities is allowed. The PSP with start-time dependent costs can be formulated as the following integer linear program:
\begin{align}
& \text{minimize} \quad
&& \sum_{j \in J} \sum_{t \in \mathcal{T}} c_{jt} x_{jt} \label{PSP-1} \\
& \text{subject to} \quad
&& \sum_{t \in \mathcal{T}} x_{jt} = 1, && \quad \forall j \in J, \label{PSP-2} \\
&&& \sum_{s = t}^{T-1} x_{is} + \sum_{s = 0}^{t + d_i - 1} x_{js} \leq 1, && \quad \forall (i, j) \in P, \ t \in \mathcal{T}, \label{PSP-3} \\
&&& x_{jt} \in \{0, 1\}, && \quad \forall j \in J, \ t \in \mathcal{T}. \label{PSP-4}
\end{align}
The objective of the problem is to minimize the total cost. Constraints~\eqref{PSP-3} model the precedence relationship, ensuring no activity starts before its predecessors have been completed. The complexity of this problem is quite well-understood. In fact, it can be solved as a linear program, as the following lemma states.

\begin{lemma}[\citealp{chaudhuri1994analyzing}]\label{lemma:integral}
	The polyhedron of problem~(\ref{PSP-1})-(\ref{PSP-4}) is integral.
\end{lemma}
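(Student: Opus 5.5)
Since the constraint matrix has consecutive-ones structure only after a change of variables, the plan is to move to cumulative variables and show total unimodularity there. For each $j\in J$ and $t\in\mathcal{T}$ define $z_{jt} = \sum_{s=0}^{t} x_{js}$, with $z_{j,-1}=0$. This is an invertible unimodular linear map: $x_{jt} = z_{jt} - z_{j,t-1}$. Hence integrality of the polyhedron in $x$-space is equivalent to integrality of its image in $z$-space. Note that vertices map to vertices, and integral points map to integral points in both directions.

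Next I rewrite the constraints in $z$-space.
\begin{itemize}
\item Constraint~\eqref{PSP-2} becomes $z_{j,T-1} = 1$.
\item Nonnegativity $x_{jt}\ge 0$ becomes $z_{j,t-1} - z_{jt} \le 0$.
\item The upper bounds $x\le 1$ are implied and can be dropped.
\item For $(i,j)\in P$, the first sum in~\eqref{PSP-3} is $\sum_{s=t}^{T-1} x_{is} = 1 - z_{i,t-1}$, using $z_{i,T-1}=1$. The second sum is $\sum_{s=0}^{t+d_i-1} x_{js} = z_{j,t+d_i-1}$, capped at index $T-1$. So~\eqref{PSP-3} reads $z_{j,t+d_i-1} - z_{i,t-1} \le 0$.
\end{itemize}
Thus every inequality has the form $z_u - z_v \le 0$, plus equalities $z_{j,T-1}=1$ and bounds fixing $z_{j,-1}=0$. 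Whenever the index exceeds $T-1$, the term is the constant $1$ and the constraint becomes a simple bound on one variable.

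The resulting system is $Mz \le b$ with $b$ integral, where each row of $M$ has at most one $+1$ and one $-1$, or is a single unit entry. This is the transpose of a directed graph incidence matrix, augmented with unit rows. Such a matrix is totally unimodular. Equivalently, the system is the dual/closure structure of a min-cut problem, matching the result of \cite{minimumCut}. By Hoffman--Kruskal, $\{z : Mz\le b\}$ is integral. Pulling back through the unimodular map gives integrality of the original polyhedron.

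I expect the main obstacle to be the bookkeeping at the boundaries of the time horizon. There are indices $t+d_i-1 > T-1$ and $t=0$, and some constraints degenerate there into single-variable bounds or trivial ones. I must verify that each degenerate case still gives a row with at most one $\pm1$ and an integral right-hand side. I also need to state clearly that the linear change of variables preserves vertices, which is immediate since the map is a bijection with integral inverse.
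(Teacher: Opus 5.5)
Your proof is correct and complete. The paper gives no argument of its own for this lemma and simply cites Chaudhuri et al., so your proposal works as a self-contained replacement rather than an alternative to a proof in the text.

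The key steps all check out:
\begin{itemize}
\item The cumulative map $x\mapsto z$ is unimodular, with an integral inverse.
\item The substitution $\sum_{s=t}^{T-1}x_{is}=1-z_{i,t-1}$ is legitimate only because you keep the equality $z_{i,T-1}=1$ in the system. Without it, the row would carry two $+1$ entries.
\item Every precedence row then becomes a difference constraint $z_{j,\min\{t+d_i-1,T-1\}}\le z_{i,t-1}$.
\item A matrix with at most one $+1$ and one $-1$ per row, including its unit and equality rows, is totally unimodular.
\end{itemize}

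This difference-constraint structure is the closure structure behind the minimum-cut algorithm of M\"ohring et al., which the paper invokes right after the lemma, so your argument also makes that link explicit. It holds for arbitrary precedence sets $P$, not only the total orders $P(\textbf{y})$ used later.

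Two small remarks. First, your opening sentence about a consecutive-ones structure is inaccurate: what you obtain in $z$-space is a network (difference) structure, not consecutive ones. Second, the boundary cases you worry about are harmless:
\begin{itemize}
\item $t=0$ gives a single-variable bound, because $z_{i,-1}=0$.
\item Indices beyond $T-1$ give rows involving $z_{j,T-1}$.
\item Any degenerate infeasible case (e.g.\ $d_i>T$) just yields the empty polyhedron.
\end{itemize}
Finally, since $0\le z_{j0}\le\dots\le z_{j,T-1}=1$, the $z$-polyhedron is bounded. Hoffman--Kruskal therefore directly gives integral vertices, which pull back to integral vertices in $x$-space.
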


This property states that it is possible to relax the integrality constraints~\eqref{PSP-4} to obtain a linear program (LP) that has an optimal integral solution. In particular, \cite{minimumCut} show that the problem can be solved in strongly polynomial time via a minimum cut formulation.

We now return to the inner problem
\[  \min_{\textbf{x}\in\mathcal{X}(\textbf{y})} \ \sum_{j\in J} \sum_{t\in\mathcal{T}} c_{jt} x_{jt}, \]
for given $\textbf{c}$ and $\textbf{y}$. It is thus the same as the problem (\ref{PSP-1})-(\ref{PSP-4}), which means that it can be written as an LP. Taking the dual of this model, we obtain the following program:
\begin{align}
& \text{maximize}
&& \sum_{j\in J} \alpha_j - \sum_{(i,j)\in P(\textbf{y})}\sum_{t\in\mathcal{T}} \gamma_{(i,j),t} \label{(8)}\\
& \text{subject to} 
&& \alpha_j - \sum_{\underset{r\in J}{(j,r)\in P(\textbf{y})}}\sum_{s=0}^{t} \gamma_{(j,r),s} - \sum_{\underset{r\in J}{(r,j)\in P(\textbf{y})}}\sum_{s= \text{max}\{0, t-d_r+1\}}^{T-1} \gamma_{(r,j),s} & \nonumber\\
&&& \leq c_{jt}, &  \forall j\in J, \ t\in\mathcal{T}, \label{(9)}\\
&&& \gamma_{(i,j),t}\geq 0, & \forall (i,j)\in P(\textbf{y}), \ t\in\mathcal{T}, \label{(10)} \\
&&& \alpha_j \ \text{free}, & \forall j\in J. \label{(11)}
\end{align}
To build the adversarial problem, we need to incorporate the optimization over the uncertainty set into the previous model. Precisely, we introduce the adversarial cost increase variables, $\delta_{jt}$, for all $j\in J$ and $t\in\mathcal{T}$, obtaining the following linear programming formulation:
\begin{align}
& \text{maximize}
&& \sum_{j\in J} \alpha_j - \sum_{(i,j)\in P(\textbf{y})}\sum_{t\in\mathcal{T}} \gamma_{(i,j),t} \label{(12)}\\
& \text{subject to} 
&& \alpha_j - \sum_{\underset{r\in J}{(j,r)\in P(\textbf{y})}}\sum_{s=0}^{t} \gamma_{(j,r),s} - \sum_{\underset{r\in J}{(r,j)\in P(\textbf{y})}}\sum_{s=\text{max}\{0, t-d_r+1\}}^{T-1} \gamma_{(r,j),s} & \nonumber\\
&&&  \leq \underline{c}_{jt} + \hat{c}_{jt} \delta_{jt}, & \forall j\in J, \ t\in\mathcal{T}, \label{(13)}\\
&&& \sum_{t\in\mathcal{T}}\sum_{j\in J} \delta_{jt}\leq\Gamma, & \label{(14)} \\ 
&&& 0\leq\delta_{jt}\leq 1, & \forall j\in J,\ t\in\mathcal{T}, \label{(15)}\\
&&&  \gamma_{(i,j),t}\geq 0, & \forall (i,j)\in P(\textbf{y}), \ t\in\mathcal{T}, \label{(17)} \\ 
&&& \alpha_j \ \text{free}, & \forall j\in J. \label{(18)}
\end{align}
Note that the dual variables $\alpha_j$, $\gamma_{(i,j),t}$, and the cost increase variables $\delta_{jt}$ are continuous. Hence, the adversarial problem is again an LP problem. By taking the dual of (\ref{(12)})-(\ref{(18)}), and combining it with the first-stage problem by means of the predecessor variables $\textbf{y}\in\mathcal{Y}$, we obtain the following mixed-integer programming compact formulation for the two-stage robust project scheduling with continuous budgeted uncertain start-time dependent costs:
\begin{align}
& \text{minimize}
&& \sum_{j\in J}\sum_{t\in\mathcal{T}} \underline{c}_{jt} x_{jt} + \Gamma \pi + \sum_{j\in J}\sum_{t\in\mathcal{T}}\eta_{jt} \label{(19)}\\
& \text{subject to} 
&& \sum_{t\in\mathcal{T}} x_{jt} = 1, & \forall j\in J, \label{(20)}\\
&&& \sum_{s=t}^{T-1} x_{is} + \sum_{s=0}^{t+d_i-1} x_{js} \leq 1 + (1-y_{ij}), & \forall i,j\in J,\ t\in\mathcal{T}, \label{(21)} \\ 
&&& \pi + \eta_{jt} \geq \hat{c}_{jt} x_{jt}, & \forall j\in J,\ t\in\mathcal{T}, \label{(22)}\\
&&& y_{ii} = 0, & \forall i\in J \\
&&& y_{ij} + y_{ji} = 1, & \forall i,j\in J, i\neq j\label{(24)}\\
&&& y_{ij} + y_{jk} \leq (1 + y_{ik}), & \forall i,j,k\in J, \text{pairwise different}\label{(25)} \\
&&& y_{ij}\in\{0,1\}, & \forall i,j\in J, \label{(26)}\\
&&& x_{jt}\geq 0, & \forall j\in J,\ t\in\mathcal{T}, \label{(27)} \\
&&& \eta_{jt} \geq 0, & \forall j\in J,\ t\in\mathcal{T}, \label{(28)} \\
&&& \pi \geq 0. \label{(29)} 
\end{align}
The predecessor binary variable $y_{ij}$ satisfies $y_{ij} = 1$ if activity $i$ is a predecessor of activity $j$; otherwise, $y_{ij} = 0$. Consequently, constraint (\ref{(21)}) is only activated when activity $i$ precedes activity $j$. Hence, the compact problem can be seen as an extended total cost minimization problem, in which a feasible schedule must be constructed with the objective of minimizing the overall cost in all the possible first and second-stage scenarios. Thus, the formulation implicitly optimizes over all activity permutations through the sequencing variables, jointly accounting for the corresponding worst-case cost realizations.

\subsection{Discrete budgeted uncertainty: complexity and iterative solution method}
\label{discrete}

We now consider the case of discrete budgeted uncertainty. To formulate the adversarial problem, we can follow the same approach as in the previous section. The only difference is that, in formulation (\ref{(12)})-(\ref{(18)}), the variables $\delta_{jt}$ are binary instead of continuous. 
\begin{align}
& \text{maximize}
&& \sum_{j\in J} \alpha_j - \sum_{(i,j)\in P(\textbf{y})}\sum_{t\in\mathcal{T}} \gamma_{(i,j),t} \label{(38)}\\
& \text{subject to} 
&& \alpha_j - \sum_{\underset{r\in J}{(j,r)\in P(\textbf{y})}}\sum_{s=0}^{t} \gamma_{(j,r),s} - \sum_{\underset{r\in J}{(r,j)\in P(\textbf{y})}}\sum_{s=\text{max}\{0, t-d_r+1\}}^{T-1} \gamma_{(r,j),s} & \nonumber\\
&&&  \leq \underline{c}_{jt} + \hat{c}_{jt} \delta_{jt}, & \forall j\in J, \ t\in\mathcal{T}, \label{(39)}\\
&&& \sum_{t\in\mathcal{T}}\sum_{j\in J}\delta_{jt}\leq\Gamma, & \label{(40)} \\ 
&&& \delta_{jt}\in\{0,1\}, & \forall j\in J,\ t\in\mathcal{T}, \label{(41)}\\
&&& \gamma_{(i,j),t}\geq 0, & \forall (i,j)\in P(\textbf{y}), \ t\in\mathcal{T}, \label{(42)}\\
&&& \alpha_j \ \text{free}, & \forall j\in J. \label{(43)}
\end{align}

This means that we cannot use LP-duality to reach a compact reformulation of the overall problem. In fact, the following proof shows that a specific variant of the adversarial problem is NP-hard. Theorem~\ref{theorem:adversarial} and Corollary~\ref{cor:adversarial} demonstrate that this is the case for both execution-time dependent costs and start-time dependent costs, as discussed in Section~\ref{nominal}. Moreover, we allow the adversary to increase the costs of a specific time slot, affecting any activity that is performed at this time, see also Section~\ref{uncertainty}.

\begin{theorem}\label{theorem:adversarial}
    For given $\textbf{y} \in \mathcal{Y}$, the adversarial problem with execution-time dependent costs and time-slot attacks, i.e., the problem
    \[\max_{\textbf{c} \in \tilde{\mathcal{U}}_D(\Gamma)}\ \min_{\textbf{x} \in \mathcal{X}(\textbf{y})}\ \sum_{j \in J} \sum_{t \in \mathcal{T}} w_{jt} x_{jt}\]
    with
    \[\tilde{\mathcal{U}}_D(\Gamma) = \bigg\{\textbf{w}\in\mathbb{R}^{J \times \mathcal{T}}: w_{jt} = \underline{w}_{jt} + \tilde{w}_{jt} \tilde{\delta}_{t},\ \tilde{\delta}_{t}\in \{0,1\},\ \sum_{t\in\mathcal{T}}\tilde{\delta}_{t}\leq\Gamma \bigg\},\]
      is NP-hard.
 \end{theorem}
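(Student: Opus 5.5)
The plan is a polynomial reduction from a covering problem, so that the adversary's choice of $\Gamma$ time slots becomes a choice of a small cover. Vertex Cover is the first candidate, with Hitting Set as a fallback if the geometry is easier. I would fix the order $\textbf{y}$ once and for all as part of the construction (say $j_1 \prec j_2 \prec \dots \prec j_k$). I would use $\tilde{w}_{jt}\in\{0,1\}$ to mark which slots are "attackable" for which activity, and $\underline{w}_{jt}$ to make forbidden placements prohibitively expensive. The target equivalence is: the adversarial value is at least $1$ if and only if the graph has a vertex cover of size at most $\Gamma$. Equivalently, for the attacked set $C$ of slots, every cheap second-stage schedule must run some activity during some slot of $C$ exactly when $C$ encodes a vertex cover. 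As in Theorem~\ref{prop:hardness}, $T$ must remain polynomial, which is automatic here because Vertex Cover involves no numbers.

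The key steps, in order, are the following. First, I would reserve one special time slot $\tau_v$ for every vertex $v$; attacking $\tau_v$ corresponds to putting $v$ in the cover. All other slots are filler slots, and by setting $\tilde{w}_{jt}=0$ there, attacking them gives the adversary nothing. Second, for every edge $e=\{u,v\}$, I would design the activities and their $\underline{w}$ so that the scheduler's zero-cost options are exactly ``avoid $\tau_u$'' or ``avoid $\tau_v$''. The intended mechanism is that the fixed order, together with the durations, forces the schedule to cover $\tau_u$ or $\tau_v$ by an activity $j$ with $\tilde{w}_{j\tau}=1$. The freedom of the second stage is then only a shift of a block of consecutive activities. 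Third, I would prove both directions. If $C$ is a vertex cover, then every admissible shift runs an activity over some attacked $\tau_v$, so the cost is at least $1$. If $|C|\le\Gamma$ is not a cover, then for an uncovered edge $e$ there is a shift avoiding all of $C$. I also have to show that the remaining activities can be placed at zero cost, so that the minimum is $0$. Finally, $\Gamma$ is set to the cover size bound, and polynomiality follows by counting slots.

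The main obstacle is the sharing of vertex slots between gadgets. A single slot $\tau_v$ can host only one activity at a time, and a single long activity yields windows that form an interval structure. Hitting intervals is polynomial, so a naive layout does not encode general graphs. I would first try to exploit the per-activity weights. The activity-dependent $\tilde{w}_{jt}$ let one slot be dangerous for one activity and harmless for another. The shift of an earlier activity in the fixed order propagates to later ones, and I would try to let this propagation create non-interval interactions between the $\tau_v$'s. Concretely, I would try to make the chosen offset of the whole schedule encode which edge is left ``uncovered''. The instance would be built so that offset $o_e$ aligns the attackable slots of all activities precisely with $\{\tau_u,\tau_v\}$, giving each edge its own global offset. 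If this alignment cannot be realized with polynomially many slots, I would fall back to a reduction from 3-partition in the spirit of Theorem~\ref{prop:hardness}, with unary-encoded durations. There the attacked slots would play the role of the black separators, and the adversary's best placement of $\Gamma=m+1$ separators forces a packing argument.
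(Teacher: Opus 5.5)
Your choice of Vertex Cover as the source problem is sound. It is the $0/1$ special case of the paper's reduction: the paper reduces from max-min selection (Lemma~\ref{lemma:maxminsel}), and taking items $=$ vertices, scenarios $=$ edges, $c^e_v=1$ iff $v\in e$, $p=\Gamma$ and $V=1$ gives exactly your equivalence.

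\textbf{The gap} is that the proposal never gives the construction, and that construction is the whole proof. You need a fixed order and costs such that (a) the inner minimizer's cheap options are exactly one schedule per edge, chosen \emph{globally}, and (b) the schedule for $e=\{u,v\}$ hits attackable job--slot pairs exactly at $\tau_u$ and $\tau_v$. You leave this as ``I would try''.

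Your second step also states the wrong local requirement. If an edge gadget's zero-cost options are ``avoid $\tau_u$'' or ``avoid $\tau_v$'', the scheduler pays only when both endpoints are attacked. Summing such gadgets over edges then counts edges inside $C$, which makes the decision question trivial rather than Vertex Cover. What you need is that option $e$ is safe iff it avoids \emph{both} $\tau_u$ and $\tau_v$.

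The 3-partition fallback does not work either. Since $\textbf{y}$ is fixed, the second stage is the chain PSP, which is polynomially solvable (cf. Lemma~\ref{lemma:integral}). The packing freedom behind Theorem~\ref{prop:hardness} is gone, and you do not say where hardness would come from instead.

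\textbf{The missing gadget.} It is simple with unit durations and job-specific $\tilde{w}$. It realizes your ``global offset'' idea and removes your slot-sharing obstacle, because only one block occupies the region at a time.
\begin{itemize}
\item Reserve $|V|$ consecutive region slots $t(v)$.
\item For each edge $e$, create a block of $|V|$ unit standard jobs $j(v,e)$. In $\textbf{y}$ the blocks are separated by zero-cost unit blocker jobs, with margins of $|V|(|E|-1)+|E|$ slots on both sides of the region.
\item Set $\underline{w}=M$ for standard jobs outside the region and $0$ inside.
\end{itemize}
Two standard jobs from different blocks inside the region force a blocker into it. Hence the region holds $|V|$ standard jobs only if one whole block sits there, aligned with $j(v,e)$ at $t(v)$, and every other schedule pays at least one extra $M$.

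Now set $\tilde{w}_{j(v,e),t(v)}=1$ iff $v\in e$, and $\tilde{w}=0$ elsewhere. For attacked set $C$ and $M\ge 3$, the inner value is $|V|(|E|-1)M+\min_{e\in E}|C\cap e|$. So the adversarial value is at least $|V|(|E|-1)M+1$ iff $G$ has a vertex cover of size at most $\Gamma$.

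In this design the inner minimum is a constant baseline, not $0$ as you planned. Without a penalty for standard jobs outside the region, the scheduler would keep them all outside and never be attacked, so the threshold must be shifted by the baseline.

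Completed this way, your route is slightly more elementary than the paper's, since it avoids the detour through min-max selection in Lemma~\ref{lemma:maxminsel}.
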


 In order to prove Theorem~\ref{theorem:adversarial}, we first begin with a lemma. It is well-known that the min-max selection problem with discrete uncertainty is NP-hard, see \cite{kasperski2009randomized}. As a simple consequence, also the max-min problem version is hard.

\begin{lemma}\label{lemma:maxminsel}
The following max-min selection problem is strongly NP-complete: Given an uncertainty set $\mathcal{U} = \{\textbf{c}^1,\dots,\textbf{c}^K\}\subseteq\mathbb{N}^n_{0}$, and parameters $p,V\in\mathbb{N}$, is there a set $S \subseteq \{1, \dots, n\}$ with $\lvert S \rvert =p$ such that $\min_{k\in\{1, \dots, K\}} \sum_{i\in S} c^k_i \ge V$?
\end{lemma}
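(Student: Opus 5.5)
Membership in NP is immediate: a certificate is the set $S$ itself. Given $S$, we compute the $K$ sums $\sum_{i\in S} c^k_i$ and check that each is at least $V$, in time polynomial in $n$, $K$ and the encoding length. The remaining work is strong NP-hardness, which I would obtain by a reduction from the min-max selection problem with discrete scenarios of \cite{kasperski2009randomized}. Its decision version asks: given $\mathcal{U}'=\{\textbf{a}^1,\dots,\textbf{a}^K\}\subseteq\mathbb{N}_0^n$, $p$ and $W$, is there $S$ with $|S|=p$ and $\max_k \sum_{i\in S} a^k_i \le W$? I would first check that the hardness proof in that reference (or a standard one, e.g.\ via 3-partition or a balanced-partition-type problem with unary numbers) yields strong NP-hardness. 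If it does not obviously do so, I would instead reduce directly from a strongly NP-hard problem such as exact cover by 3-sets or 3-partition, encoding each item as an element and each "balance" constraint as a scenario.

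The transformation is complementation of costs. Let $M=\max_{k,i} a^k_i$ and define $c^k_i = M - a^k_i \in \mathbb{N}_0$. For every $S$ with $|S|=p$ we have $\sum_{i\in S} c^k_i = pM - \sum_{i\in S} a^k_i$, hence
\[
\min_k \sum_{i\in S} c^k_i = pM - \max_k \sum_{i\in S} a^k_i .
\]
Setting $V = pM - W$, the two instances have the same answer. If $V\le 0$, the original instance is trivially a yes-instance (every $p$-set has max-sum at most $pM\le W$), and we output any fixed yes-instance. The numbers in the new instance are bounded by $M$, so under unary encoding the reduction is polynomial. Strong hardness therefore transfers.

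The main obstacle is the strongness claim, since the cited min-max result is often stated only as (weak) NP-hardness for a fixed or bounded number of scenarios. With unbounded $K$, however, the problem is strongly hard. If needed, I would give a self-contained reduction from exact cover by 3-sets. Take elements $u_1,\dots,u_{3q}$ and sets $C_1,\dots,C_n$. Let items correspond to sets, set $p=q$, and introduce one scenario per element $u$ with $c^u_i=1$ if $u\in C_i$ and $0$ otherwise, together with $V=1$. A $q$-subfamily covering every element is exactly an exact cover, because $q$ triples cover $3q$ elements only if they are disjoint. All numbers are $0/1$, which gives strong NP-completeness directly. This backup argument avoids depending on the cited source, and I would likely present it as the main proof.
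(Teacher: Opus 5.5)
Your proof is correct. The complementation argument in your second paragraph is exactly the paper's proof. The paper reduces from the min-max selection problem of \cite{kasperski2009randomized}, takes that problem's strong NP-completeness from the literature, and complements costs against a constant.

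The only difference there is the choice of constant. In your notation, the paper uses $C=\max\{\lceil W/p\rceil+1,\,M\}$ rather than $M$. This forces $V=pC-W\ge p\ge 1$, so no case distinction is needed. Your case distinction also contains a small slip. When $V\le 0$, the source instance is a yes-instance only if $p\le n$; for $p>n$ no $p$-subset exists and the answer is no. You would need to treat $p>n$ separately, or simply use the larger constant.

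The reduction you say you would actually present, from exact cover by 3-sets, is a genuinely different route, and it is sound. It uses one $0/1$ scenario per element, with $p=q$ and $V=1$. The key facts hold: $q$ triples covering all $3q$ elements must be pairwise disjoint, and any exact cover uses exactly $q$ triples.

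This route buys you independence from the cited result and a transparent ``strongly''. Every number in the constructed instance is $0$ or $1$, so you show the problem stays NP-complete even for $0/1$ costs and $V=1$. The paper's complementation is shorter and ties the max-min problem directly to the known min-max result, but its strong hardness is only as good as that citation.

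You also argue membership in NP explicitly, which the paper leaves implicit.
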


\begin{proof}

    Let an instance of the strongly NP-complete min-max selection problem with discrete uncertainty be given, consisting of an uncertainty set $\mathcal{U}'=\{\textbf{c}'^1,\dots,\textbf{c}'^{K'}\}\subseteq\mathbb{N}^{n'}_{0}$, and parameters $p',V'\in\mathbb{N}$. The question is if there is a set $S \subseteq \{1, \dots, n'\}$ with $\lvert S \rvert =p'$ such that $\max_{k\in\{1, \dots, K'\}} \sum_{i\in S} c'^k_i \le V'$.

    We construct an instance of the max-min selection problem by setting $n=n'$, $K=K'$, $p=p'$, and $c^k_i = C-c'^k_i$ for all $i \in \{1, \dots, n\}$ and $k \in \{1, \dots, K\}$, where $C = \max\{ \lceil V'/p \rceil + 1, \max \{ c'^k_i : i \in \{1, \dots, n'\}, k \in \{1, \dots, K'\}\}\}$. Finally, we set $V = pC - V'$.

    Then, for any $S\subseteq\{1, \dots, n\}$ with $\lvert S \rvert = p$, we have
\[ \min_{k\in\{1, \dots, K\}} \sum_{i\in S} c^k_i = \min_{k\in \{1, \dots, K\}} \sum_{i\in S} (C - c'^k_i) = pC - \max_{k\in\{1, \dots, K'\}} \sum_{i\in S} c'^k_i. \]
Hence, there is a set $S\subseteq\{1, \dots, n'\}$ with $\lvert S \rvert = p'$ and $\max_{k\in\{1, \dots, K'\}} \sum_{i\in S} c'^k_i \le V'$ if and only if there is a set $S\subseteq \{1, \dots, n\}$ with $\lvert S \rvert = p$ and $\min_{k\in\{1, \dots, K\}} \sum_{i\in S} c^k_i \ge pC - V'$.
\end{proof}

\begin{proof}[Proof of Theorem~\ref{theorem:adversarial}]

  Let an instance of the strongly NP-complete max-min selection problem (see Lemma~\ref{lemma:maxminsel}) with $n$ items, $K$ scenarios, and parameters $p$ and $V$ be given. We construct an instance of the adversarial problem as stated in Theorem~\ref{theorem:adversarial} in the following way. There are $|J|=nK+K+1$ jobs. Of these, there are $K+1$ so-called blocker jobs and $nK$ standard jobs. All jobs have a duration of $d_j = 1$. The given sequence of jobs (encoded in $\textbf{y} \in \mathcal{Y}$) is such that we begin with one blocker job, followed by $n$ standard jobs (which correspond to scenario $\textbf{c}^1$), followed by another blocker job, followed by $n$ standard jobs (which correspond to scenario $\textbf{c}^2$), and so on. In Figure~\ref{fig:adversarial}, we illustrate the given sequence in the top.

\begin{figure}
  \begin{center}
        \begin{tikzpicture}
      \newcommand{\drawsquares}[1]
      {
        \draw[step=0.5,thick] (0,0) grid (#1 / 2, 0.5);
      }

      \newcommand{\drawjobsequence}
      {
        \drawsquares{15}
        \foreach \i in {0,5,10,15}
        {
          \draw[fill=black] (\i / 2,0) rectangle (\i / 2 + 0.5,0.5);
        }
        \foreach \i in {1,2,3}
        {
          \node at (\i / 2 * 5 - 1,0.85) {$\textbf{c}^{\i}$};
        }
      }

      \newcommand{\drawtimehorizon}
      {
        \drawsquares{26}
        \foreach \i in {10,15}
        {
          \draw[fill=black] (\i / 2,0) rectangle (\i / 2 + 0.5,0.5);
        }
        \foreach \i in {11,...,14}
        {
          \draw[fill=lightgray] (\i / 2,0) rectangle (\i / 2 + 0.5,0.5);
        }
      }

      \begin{scope}
        \drawjobsequence
      \end{scope}
      \draw[-latex,very thick] (4,-0.5) -- (4,-1.5); 
      \begin{scope}[shift={(-2.5,-2.5)}]
        \drawtimehorizon
      \end{scope}
    \end{tikzpicture}
\end{center}
\caption{Illustration of the proof of Theorem~\ref{theorem:adversarial} for the case of $n = 4$ and~$K = 3$.}\label{fig:adversarial}
\end{figure}

Blocker jobs have $\underline{w}_{jt} = \tilde{w}_{jt} = 0$ for all time slots $t$. In our time horizon (illustrated in the bottom of Figure~\ref{fig:adversarial}), there are two special time slots that are $n+1$ time slots apart. We call the $n$ time slots between the two special time slots the special region. Any standard job not assigned to the special region has high costs, i.e., we set $\underline{w}_{jt} = M$ (to be defined below) for standard jobs $j$ and time slots $t$ that are outside of the special region. Moreover, the uncertainty set does not allow any modification outside of the special region, i.e., $\tilde{w}_{jt} = 0$ for standard jobs $j$ and time slots $t$ that are outside of the special region. Note that this construction forces the inner minimization problem to choose a solution where two subsequent blocker jobs are assigned to the two special slots, as this maximizes the number of standard jobs in the special region and therefore minimizes the number of jobs for which the high costs $M$ are paid. In order to make any solution of this type possible, we choose the time horizon to be sufficiently large such that there are $n(K-1)+K$ time slots both left and right of the special region.

For all standard jobs, we set $\underline{w}_{jt} = 0$ within the special region.
For $i \in \{1, \dots, n\}$ and $k \in \{1, \dots, K\}$, let $j(i,k)$ be the $i$-th job in the part of the job sequence that corresponds to scenario $\textbf{c}^k$, and let $t(i)$ be the $i$-th time slot in the special region. We then define $\tilde{w}_{j(i,k), t(i)} = c^k_i$. Finally, we set $\Gamma=p$. This construction enables the adversarial decision maker to select $p$ time slots in the special region such that the jobs assigned to these time slots then have to pay the cost of the corresponding item $i$ in the scenario~$\textbf{c}^k$ that is chosen by the inner problem. Hence, the costs paid in the special region directly correspond to the costs in the max-min selection problem. Outside of the special region, an additional cost of $n(K-1)M$ occurs. Note that choosing $M = \sum_{k = 1}^K \sum_{i = 1}^n c^k_i + 1$ is sufficient for ensuring that the minimization problem selects a solution of the desired type.

We conclude that there exists a solution to the max-min selection problem with costs at least $V$ if and only if there exists a solution to the adversarial problem with costs at least $V + n(K-1)M$.
\end{proof}

Observe that the construction in the proof of Theorem~\ref{theorem:adversarial} defines the durations of all jobs to be~1. In this special case, there is no difference between start-time dependent costs and execution-time dependent costs. Hence, the following result holds:

\begin{corollary} \label{cor:adversarial}
The adversarial problem as defined in Theorem~\ref{theorem:adversarial}, but with start-time dependent costs instead of execution-time dependent costs, is NP-hard.
\end{corollary}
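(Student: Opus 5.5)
The plan is to reuse the reduction from the proof of Theorem~\ref{theorem:adversarial} verbatim and check that nothing in it depends on costs being execution-time dependent. Start from an instance of the max-min selection problem of Lemma~\ref{lemma:maxminsel} and build the same scheduling instance. It has $K+1$ blocker jobs and $nK$ standard jobs, all with duration $d_j=1$. It uses the same fixed order $\textbf{y}$, the same special region of $n$ slots between two special slots, the same high cost $M$ outside the region, the same deviations $\tilde{w}_{j(i,k),t(i)}=c^k_i$, and $\Gamma=p$.

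The key observation is the identity $c_{jt}=\sum_{s=t}^{t+d_j-1} w_{js}$ from Section~\ref{nominal}. When $d_j=1$ it reduces to $c_{jt}=w_{jt}$. So we define start-time costs $\underline{c}_{jt}:=\underline{w}_{jt}$ and $\tilde{c}_{jt}:=\tilde{w}_{jt}$, with the adversary again attacking whole time slots via $\tilde{\delta}_t$.

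Under this definition every scenario in $\tilde{\mathcal{U}}_D(\Gamma)$ maps to a start-time scenario with identical cost for every $\textbf{x}\in\mathcal{X}(\textbf{y})$, and the map is a bijection on the choices of $\tilde{\delta}$. Hence the max-min values of the two adversarial problems coincide. The equivalence established in the proof of Theorem~\ref{theorem:adversarial} therefore carries over unchanged: a selection with value at least $V$ exists if and only if the adversarial value is at least $V+n(K-1)M$.

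The construction is polynomial, since $T$ is polynomial in $nK$ and the numbers are encoded in unary by strong NP-completeness, so NP-hardness follows. The only point to verify carefully is that the uncertainty set keeps its time-slot-attack structure after the translation, i.e.\ that $\tilde{c}_{jt}\tilde{\delta}_t$ is an admissible form for start-time costs. This is immediate for unit durations, so I expect no real obstacle.
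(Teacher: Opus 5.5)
Your proposal is correct and is essentially the paper's own argument: the reduction in the proof of Theorem~\ref{theorem:adversarial} uses only unit-duration jobs, so $c_{jt}=w_{jt}$, a time-slot attack on execution coincides with one on start, and the same instance with the same equivalence gives NP-hardness. A minor aside: the appeal to unary encoding is unnecessary, because the horizon $T$ depends only on $n$ and $K$ and not on the numerical values.
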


This result indicates that it is unlikely that a compact formulation for the robust problem with discrete budgeted uncertainty exists, such as the formulation that could be derived for the case of continuous budgeted uncertainty. Indeed, as the adversarial problem is NP-hard, it is unlikely that the robust problem is contained in NP at all. We conjecture the problem to be $\Sigma^p_2$-hard (see \citealp{goerigk2024complexity,grune2025completeness} for related complexity results), which indicates that it could be impossible to solve it directly using IP-solvers.

Due to this complexity, we propose an iterative solution method (\citealp{zeng2013solving}). For a subset $\mathcal{U}'=\{\textbf{c}^1,\ldots,\textbf{c}^K\}\subseteq \mathcal{U}_{D}(\Gamma)$ of scenarios, we determine a robust solution by solving
\[ \min_{\textbf{y}\in\mathcal{Y}}\ \max_{k\in\mathcal{K}}\ \min_{\textbf{x}\in\mathcal{X}(\textbf{y})}\ \sum_{j\in J} \sum_{t\in\mathcal{T}} c^k_{jt} x_{jt}, \]
where $\mathcal{K}=\{1,\ldots,K\}$. This can be reformulated as the following optimization problem:

\begin{align}
& \text{minimize}
&& z \label{(30)}\\
& \text{subject to} 
&& z \geq \sum_{j\in J}\sum_{t\in\mathcal{T}} c_{jt}^k x_{jt}^k, & \forall k\in \mathcal{K}, \label{(31)}\\
&&& \sum_{t\in\mathcal{T}} x_{jt}^k = 1, & \forall j\in J,\ k\in \mathcal{K}, \label{(32)}\\
&&& \sum_{s=t}^{T-1} x_{is}^k + \sum_{s=0}^{t+d_i-1} x_{js}^k \leq 1 + (1-y_{ij}), & \forall i,j\in J,\ t\in\mathcal{T},\ k\in \mathcal{K}, \label{(33)} \\ 
&&& y_{ii} = 0, & \forall i\in J, \\
&&& y_{ij} + y_{ji} = 1, & \forall i,j\in J, i\neq j\label{(34)}\\
&&& y_{ij} + y_{jl} \leq (1 + y_{il}), & \forall i,j,l\in J, \text{pairwise different}\label{(35)} \\
&&& x_{jt}^k\in\{0,1\}, & \forall j\in J,\ t\in\mathcal{T},\ k\in \mathcal{K}, \label{(36)}\\
&&& y_{ij}\in\{0,1\}, & \forall i,j\in J. \label{(37)} 
\end{align}
Solving (\ref{(30)})-(\ref{(37)}) provides us with a lower bound on the optimal value of the two-stage robust problem with respect to $\mathcal{U}_{D}(\Gamma)$, as only a subset of scenarios $\mathcal{U}'$ is considered. We obtain an upper bound by solving the adversarial problem (\ref{(38)})-(\ref{(43)}) with respect to this solution $\textbf{y}$. If the upper bound is larger than the previous lower bound, the corresponding scenario $\textbf{c}$ is added to $\mathcal{U}'$, and we repeat the process. Otherwise, lower and upper bounds coincide, which means that we have found an optimal solution. Note that $\mathcal{U}_{D}(\Gamma)$ is finite, which ensures finite convergence of this method.

\section{Empirical analysis}
\label{results}

In this section, we report computational experiments to assess the exact models and enhancements proposed in this paper. We begin by describing the benchmark instances and the experimental setup used in our study. We then analyze the performance of the baseline approaches---including the nominal formulations, the compact robust model, and the iterative discrete method---focusing on scalability and solution quality as problem size and uncertainty increase. Subsequently, we introduce and examine the effect of several strengthening strategies for the compact formulation. Finally, we transfer selected strengthening ideas to the iterative discrete approach and discuss the resulting improvements and remaining limitations.

\subsection{Experimental setup}

We begin by describing the benchmark instances generated for this study. Since the specific problem variant considered here excludes precedence constraints and lacks standard publicly available benchmarks (e.g., unlike \citealp{PSPLIB}), we created a new test suite. The primary instance characteristic is the number of activities, $n$. We generated 20 instances for each of the eight problem sizes $n \in \{5, 10, 15, 20, 25, 30, 35, 40\}$, resulting in a total of 160 instances. To isolate the impact of uncertainty, different values of the robustness parameter $\Gamma$ are considered for each problem size.

The instance parameters are defined as follows. Activity durations $d_j$ are drawn independently from a uniform distribution $U[1,5]$. The planning horizon is set to $T = 1.2 \cdot \sum_{j \in J} d_j$. Execution-time dependent costs $w_{jt}$ are generated from $U[1,9]$ and transformed into nominal start-time dependent costs $\underline{c}_{jt}$ as described in Section~\ref{nominal}. Cost deviations $\hat{c}_{jt}$ are drawn from $U[1,5]$.

We evaluate four distinct modeling approaches. First, we consider the nominal lower bound ($LB$), which solves the nominal formulation~\eqref{(1)}--\eqref{(4)} (see Section~\ref{nominal}) using nominal costs $\underline{c}_{jt}$. Second, we compute the nominal upper bound ($UB$) by solving the same nominal formulation with worst-case costs $\underline{c}_{jt} + \hat{c}_{jt}$. For the robust setting with continuous budgeted uncertainty, we evaluate the compact robust model ($C$), corresponding to the compact formulation~\eqref{(19)}--\eqref{(29)} introduced in Section~\ref{robust}. Finally, for discrete budgeted uncertainty, we employ the iterative discrete method ($ID$) described in Section~\ref{discrete}, which alternates between solving an adversarial subproblem and the associated master problem~\eqref{(30)}--\eqref{(37)}.

Model performance is assessed along two main dimensions: solution quality under uncertainty and computational efficiency. We report standard optimization metrics, including the best objective value found, the relative optimality gap, and the total runtime. In addition, we record the number of instances solved to proven optimality within the time limit, as well as the percentage of runs terminated due to time exhaustion. When the time limit is reached, the best feasible solution available is reported. Since a solution is defined by a fixed precedence ordering, its robustness is evaluated by computing the objective value under adversarial scenarios for both continuous ($Cont.\ BU$) and discrete ($Disc.\ BU$) uncertainty; lower values indicate higher robustness. Additionally, solutions are evaluated using the nominal lower-bound model ($LB$) in order to assess their performance in the absence of uncertainty.

The experimental analysis is structured to progressively investigate the sources of computational difficulty in the problem. We first analyze scalability as the number of activities $n$ and the uncertainty budget $\Gamma$ increase. In particular, this robustness parameter scales proportionally with the problem size. We then address the inherent computational hardness of the robust formulations by introducing and evaluating strengthened variants designed to improve convergence and scalability. These enhancements incorporate additional valid inequalities and warm-start strategies, and their impact is assessed in terms of both computational performance and solution quality.

All the models proposed in this paper have been coded in Java (\citealp{JAVA}) and solved using Gurobi 12.0.3. The computational experiments have been conducted on a cluster of workstations at Miguel Hernández University of Elche. In particular, we utilize the PRMTO-CIO-0 node, which is based on the Supermicro SYS-1029GQ-TRT model and equipped with two Intel(R) Xeon(R) Gold 6242R CPUs running at 3.10 GHz 
and 768 GB of RAM, running Linux. All instances were run in single-thread mode to avoid parallel execution and ensure fair time comparisons.

\subsection{Preliminary performance analysis}\label{preliminary}

This section provides a first comparative assessment of the different modeling approaches considered in this work, with the aim of identifying their computational behavior and establishing the main sources of difficulty of the problem. All computational results are obtained under a fixed time limit of two hours per instance. We start by analyzing two purely nominal formulations, namely the lower-bound ($LB$) and upper-bound ($UB$) models, which ignore uncertainty and serve as baseline references. We then examine the compact robust formulation ($C$), which explicitly incorporates continuous budgeted uncertainty through the parameter $\Gamma$, and assess its scalability and solution quality under increasing problem sizes and uncertainty levels. Finally, we consider the iterative discrete robust approach ($ID$) and compare its performance with that of the compact formulation. This progressive analysis allows us to disentangle the impact of uncertainty from that of the underlying scheduling structure and to highlight the trade-offs between robustness, computational effort, and solution quality.

Table~\ref{tab:nominal_LBUB} summarizes the computational performance of the nominal baseline models across all problem sizes considered. Both formulations solve every instance to optimality within negligible computational time, including the largest instances. This confirms that, in the absence of uncertainty, the underlying scheduling problem is computationally tractable and does not constitute a limiting factor, despite being NP-hard. Accordingly, all problem sizes in the computational study are solved using both the $LB$ and $UB$ models, and their solutions are used as reference points when assessing the behavior and performance of the robust formulations.

For each problem size, the table reports the average optimal objective value as reported by the model (\emph{Obj. value}) and the corresponding running time in seconds (\emph{$t$ (s)}). As expected, the upper-bound model consistently produces larger objective values than its lower-bound counterpart, since it optimizes against worst-case realizations of start-time dependent costs and therefore provides a conservative reference for the nominal execution cost.

\begin{table}[!ht]
    \small
    \centering
    \begin{tabular}{lcccccccc}
    \toprule
    & \multicolumn{8}{c}{\textbf{$LB$}} \\
    \cmidrule(lr){2-9}
    \textbf{$n$} & 5 & 10 & 15 & 20 & 25 & 30 & 35 & 40 \\
    \midrule
    \textbf{Obj. value} & 44.25 & 80.80 & 113.45 & 134.90 & 166.25 & 186.80 & 218.70 & 235.55 \\
    \textbf{$t$ (s)} & 0.00 & 0.01 & 0.07 & 0.15 & 0.30 & 0.61 & 4.02 & 6.53 \\
    \midrule
    & \multicolumn{8}{c}{\textbf{$UB$}} \\
    \cmidrule(lr){2-9}
    \textbf{$n$} & 5 & 10 & 15 & 20 & 25 & 30 & 35 & 40 \\
    \midrule
    \textbf{Obj. value} & 56.80 & 105.10 & 147.40 & 177.85 & 219.65 & 252.10 & 294.85 & 320.05 \\
    \textbf{$t$ (s)} & 0.00 & 0.01 & 0.07 & 0.14 & 0.32 & 0.64 & 4.78 & 7.34 \\
    \bottomrule
    \end{tabular}
    \caption{Average objective value and running time for the nominal lower- and upper-bound models ($LB$ and $UB$) over 20 instances per problem size.}
    \label{tab:nominal_LBUB}
\end{table}

It is worth stressing that these nominal formulations do not provide any control over uncertainty, as the robustness parameter $\Gamma$ is not present in either model. As a result, they merely capture two extreme cost scenarios---fully nominal and fully worst-case--- without regulating how many activities may be affected by adverse deviations. While useful as reference points, such solutions offer no intermediate level of protection. This limitation motivates the introduction of explicitly robust optimization models based on budgeted uncertainty. In the following, we therefore turn to the analysis of the compact robust formulation, which seeks to achieve a balanced compromise between solution quality and protection against uncertainty.

We now turn to the analysis of the compact robust formulation~\eqref{(19)}--\eqref{(29)}. While the nominal models are solved for all problem sizes and serve as baseline references throughout the computational study, the compact formulation is evaluated only for instance sizes for which robustness plays a meaningful role in shaping the solution. Based on preliminary computational experiments, instances with 5 and 10 activities were found to be solved very efficiently by the compact model, with limited sensitivity to the uncertainty budget. Consequently, the analysis of the compact formulation focuses on instances with 15 or more activities, where the impact of budgeted uncertainty becomes more pronounced and the behavior of the robust model can be more informatively assessed.

We start the analysis of the compact formulation by considering instances of small to moderate size, corresponding to problems with 15 and 20 activities. Table~\ref{tab:baseline_compact_1520} reports the average performance of the compact formulation under continuous budgeted uncertainty for these problem sizes and different values of the robustness parameter $\Gamma$.

For each problem size, the table is organized into blocks corresponding to different uncertainty levels ($u$), which represent the target proportion of activities that may be simultaneously affected by adverse cost deviations. Specifically, for both problem sizes we consider uncertainty levels of 30\%, 50\%, and 70\%. The uncertainty budget is computed as $\Gamma = \lceil u \cdot n \rceil$.
For each configuration, \emph{\#Opt.} denotes the number of instances (out of 20) solved to proven optimality within the time limit, while \emph{TL (\%)} reports the percentage of instances that reach the time limit.

The remaining columns summarize the main performance indicators of the method, reported as averages over the 20 instances. In particular, \emph{Obj. value} denotes the objective value of the best solution found, \emph{Root Bound} corresponds to the lower bound obtained at the root node of the branch-and-bound (B\&B) tree, \emph{Gap rel. (\%)} reports the relative optimality gap at termination, and \emph{$t$ (min)} gives the average computational time in minutes.

\begin{table}[!htb]
    \small
    \centering
    \adjustbox{max width=\textwidth}{
    \begin{tabular}{cccccccccc}
    \toprule
    & & & & & \multicolumn{4}{c}{\textbf{Avg.}}\\
    \cmidrule(lr){6-9}
    \textbf{$n$} & \textbf{$u$ (\%)} & \textbf{$\Gamma$} & \textbf{\#Opt.} & \textbf{TL (\%)} &\textbf{Obj. value} & \textbf{Root Bound} & \textbf{Gap rel. (\%)} & \textbf{$t$ (min)} \\
    \midrule
    \multirow{3}{*}{15} & 30 & \multicolumn{1}{c|}{5}  & 20 & 0 & 131.46 & 119.55 & 0.005 & 1.62 \\
        & 50 & \multicolumn{1}{c|}{8} & 20 & 0 & 136.86 & 122.69 & 0.005 & 3.41 \\
        & 70 & \multicolumn{1}{c|}{11} & 20 & 0 & 140.66 & 125.20 & 0.006 & 5.46 \\
    \midrule
    \multirow{3}{*}{20} & 30 & \multicolumn{1}{c|}{6}  & 20 & 0 & 155.90 & 143.01 & 0.006 & 24.16 \\
        & 50 & \multicolumn{1}{c|}{10} & 17 & 15 & 163.61 & 147.42 & 0.322 & 61.49 \\
        & 70 & \multicolumn{1}{c|}{14} & 11 & 45 & 168.59 & 150.84 & 1.227 & 81.97 \\
    \bottomrule
    \end{tabular}}
    \caption{Compact ($C$) formulation average results over 20 instances per configuration (small sizes).}
    \label{tab:baseline_compact_1520}
\end{table}

We first focus on instances with $n=15$ activities, for which the compact formulation is able to solve all instances to proven optimality within the time limit for all uncertainty levels considered. As shown in Table~\ref{tab:baseline_compact_1520}, average computational times remain relatively low, even when a large fraction of activities is allowed to deviate from their nominal costs. Nevertheless, a clear increase in computational effort is observed as the uncertainty budget grows. In particular, moving from an uncertainty level of 30\% to 50\% results in an average increase in solution time of more than 50\%, while increasing the uncertainty level from 30\% to 70\% leads to an average time increase of over 70\%. This trend highlights that, although instances with 15 activities remain computationally tractable, the complexity of the compact robust formulation is strongly affected by the level of protection against uncertainty.

The behavior of the compact formulation changes noticeably for instances with $n=20$ activities. While all 20 instances are solved to proven optimality within the time limit when the uncertainty level is set to 30\%, the computational burden increases substantially as the uncertainty budget grows. For an uncertainty level of 50\%, 3 out of the 20 instances exceed the two-hour time limit, whereas for the highest uncertainty level considered (70\%), 9 instances cannot be solved to optimality within the allotted time.

Despite this loss of optimality certification for larger uncertainty budgets, the quality of the solutions obtained by the compact formulation remains relatively high. In particular, the root bounds at the B\&B root node are fairly tight, and the relative optimality gaps at termination remain moderate. This suggests that the increased difficulty observed for $n=20$ is mainly due to the inability of the B\&B procedure to fully close the optimality gap within the time limit, rather than to a weak relaxation or poor incumbent solutions. Overall, these results indicate that, even for moderately sized instances, sufficiently large uncertainty budgets can significantly affect the computational performance of the compact robust formulation.

We next consider medium-sized instances, corresponding to problems with 25 and 30 activities. Table~\ref{tab:baseline_compact_2530} reports the performance of the compact formulation for these problem sizes. Since computational difficulty increases rapidly with both the number of activities and the robustness parameter $\Gamma$, the uncertainty levels explored in this experiment are adapted accordingly. Rather than fixing large uncertainty budgets, we progressively increase the uncertainty level starting from 10\% in order to identify the range over which the compact formulation remains computationally viable.

\begin{table}[!htb]
    \small
    \centering
    \adjustbox{max width=\textwidth}{
    \begin{tabular}{cccccccccc}
    \toprule
    & & & & & \multicolumn{4}{c}{\textbf{Avg.}}\\
    \cmidrule(lr){6-9}
    \textbf{$n$} & \textbf{$u$ (\%)} & \textbf{$\Gamma$} & \textbf{\#Opt.} & \textbf{TL (\%)} &\textbf{Obj. value} & \textbf{Root Bound} & \textbf{Gap rel. (\%)} & \textbf{$t$ (min)} \\
    \midrule
    \multirow{4}{*}{25} & 10  & \multicolumn{1}{c|}{3}  & 17 & 15 &  180.19 & 168.94 & 0.389 & 46.92 \\
        & 20 & \multicolumn{1}{c|}{5}  & 8  & 60 & 187.75 & 172.43 & 1.850 & 88.19 \\
        & 30 & \multicolumn{1}{c|}{8}  & 3 & 85 & 195.10 & 176.49 & 3.442 & 108.28 \\
        & 50 & \multicolumn{1}{c|}{13} & 0 & 100 & 205.25 & 181.67 & 5.980 & 120.00 \\
    \midrule
    \multirow{3}{*}{30} & 10 & \multicolumn{1}{c|}{3} & 16 & 20 &  201.12 & 190.00 & 0.601 & 71.80 \\
        & 20 & \multicolumn{1}{c|}{6}  & 1  & 95 & 213.85 & 195.36 & 4.396 & 118.39 \\
        & 30 & \multicolumn{1}{c|}{9}  & 0 & 100 & 223.83 & 199.41 & 7.029 & 120.00 \\
    \bottomrule
    \end{tabular}}
    \caption{Compact ($C$) formulation average results over 20 instances per configuration (breakdown sizes).}
    \label{tab:baseline_compact_2530}
\end{table}

For instances with $n=25$, a clear degradation in performance is observed as the uncertainty level increases. While 17 out of 20 instances are solved to proven optimality at the 10\% uncertainty level, this number drops to 8 at 20\% and to only 3 at 30\%. At the same time, the percentage of instances reaching the time limit increases sharply, from 15\% at 10\% uncertainty to 60\% and 85\% at 20\% and 30\%, respectively. When the uncertainty level reaches 50\%, none of the instances can be solved to optimality within the two-hour time limit, and all runs terminate due to time exhaustion.

A similar but even more pronounced behavior is observed for instances with $n=30$. Although 16 instances are solved to optimality at the 10\% uncertainty level, already at 20\% only a single instance reaches proven optimality, i.e., 95\% of the runs hit the time limit. For an uncertainty level of 30\%, no instance can be solved to optimality within the allotted time. In parallel, the average relative optimality gap increases substantially with the uncertainty level. For both problem sizes, gap values remain below 1\% at the lowest uncertainty level considered, but grow steadily as $\Gamma$ increases, reaching values close to 6\% for $n=25$ and exceeding 7\% for $n=30$ at the highest uncertainty levels.

Despite this deterioration, the root bounds obtained at the B\&B root node remain relatively tight across all configurations, indicating that the observed performance breakdown is primarily due to the difficulty of closing the optimality gap within the time limit rather than to a weak relaxation. An important insight emerging from these results is that the rapid deterioration in computational performance is driven primarily by the level of uncertainty rather than by the problem size alone. Indeed, already for $n=20$ activities, large uncertainty budgets lead to a substantial loss of optimality certification, and a similar pattern is observed for $n=25$ and $n=30$, where increasing $\Gamma$ ultimately prevents the compact formulation from solving any instance to proven optimality. Overall, these results clearly show that, for medium-sized instances, the compact formulation reaches its computational limits even for relatively modest uncertainty budgets, which strongly motivates the need for dedicated modeling enhancements. These enhancements are introduced and analyzed in detail in the following section.

We now turn to the evaluation of the quality of the solutions produced by the different approaches considered so far. To this end, we assess the performance of the schedules obtained by each model under a common evaluation framework that includes both nominal and adversarial cost realizations. In the following, we focus on instances with 15 and 20 activities, which represent the largest problem sizes for which the compact formulation is able to solve almost all instances to proven optimality within the time limit. These instance sizes therefore provide a natural and representative setting for comparing the quality of the solutions produced by the different approaches. Complete evaluation results for all tested configurations are reported in Appendix~\ref{A}.

\begin{figure}[!ht]
    \centering
    \includegraphics[width=0.9\textwidth]{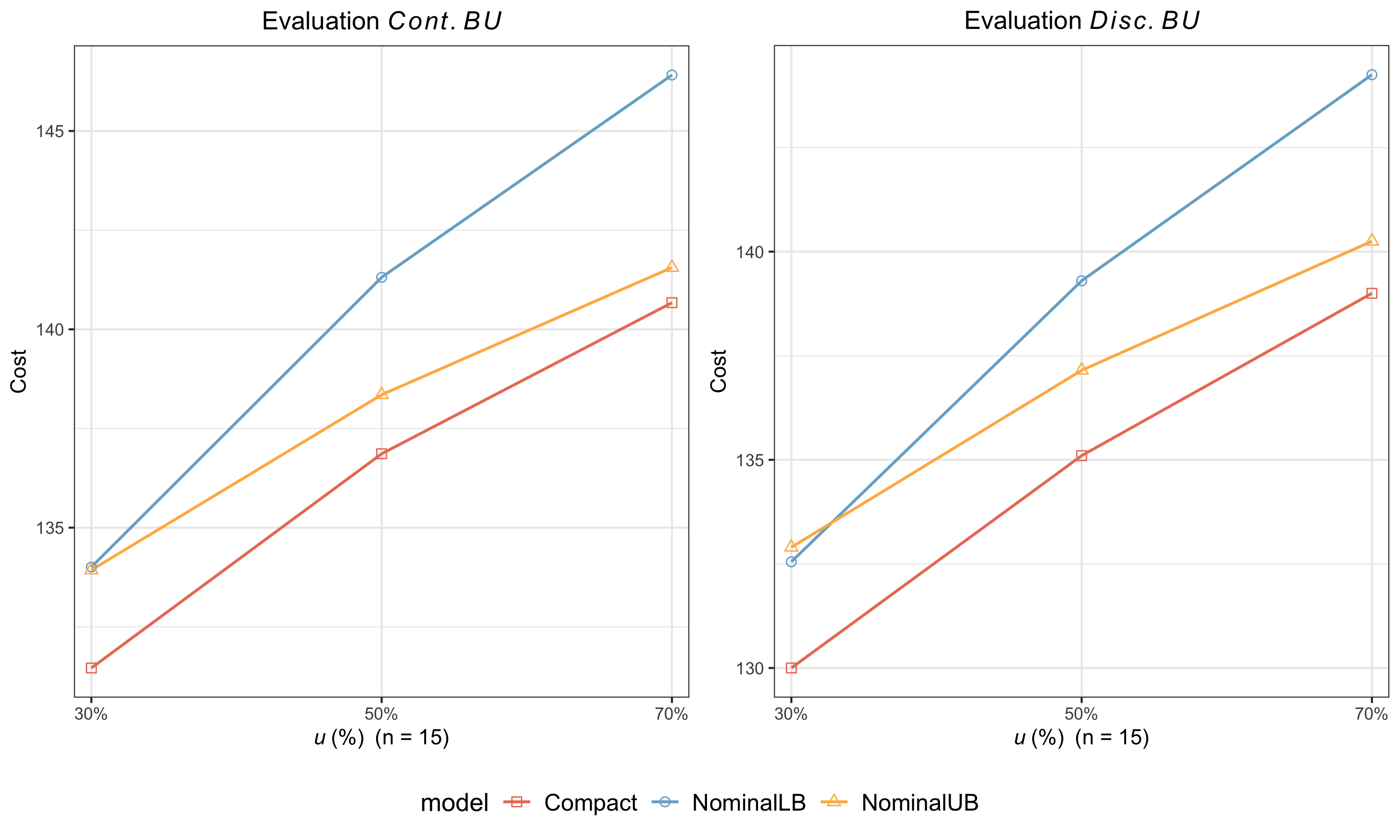}
    \includegraphics[width=0.9\textwidth]{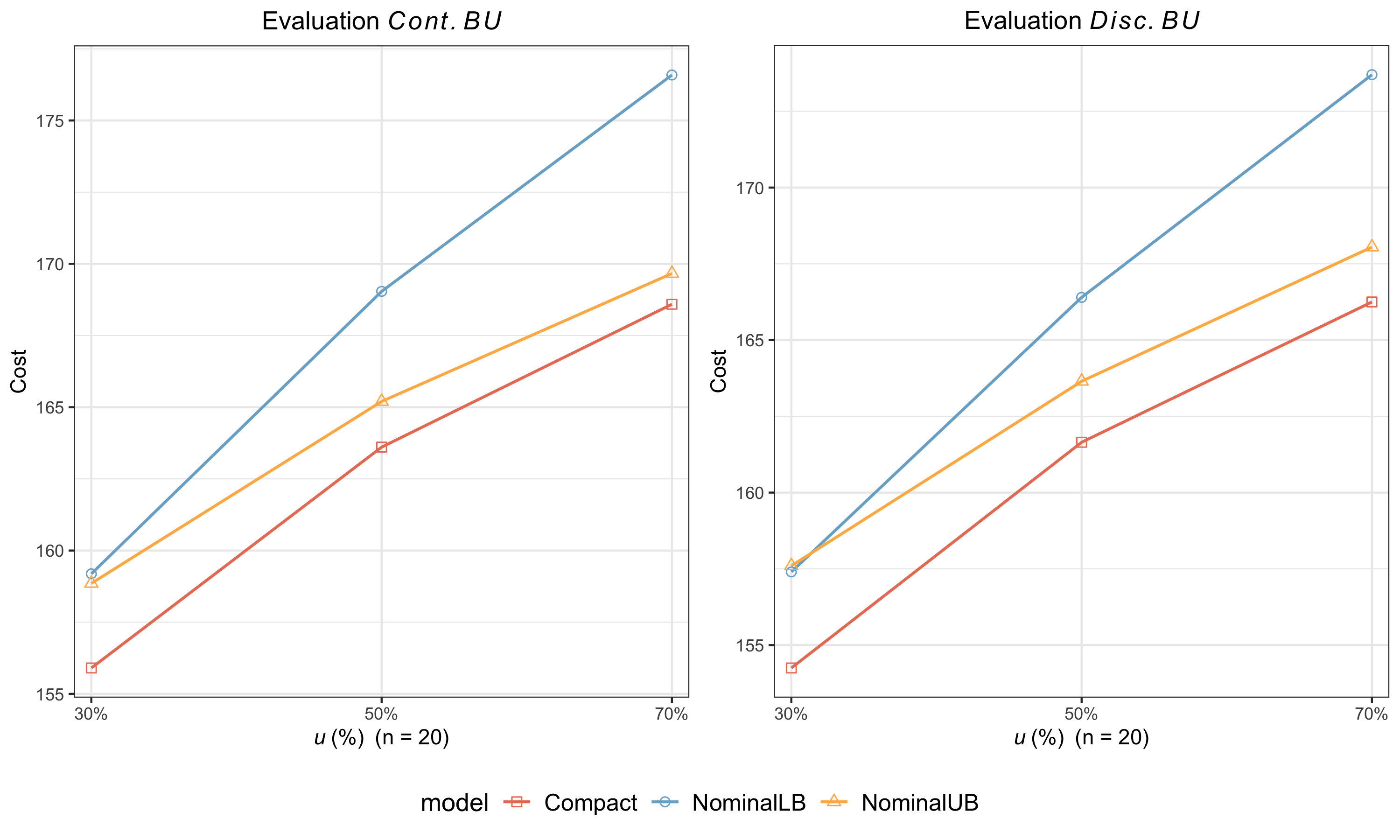}
    \caption{Average adversarial evaluation cost of the schedules obtained with $C$, $LB$ and $UB$ for increasing uncertainty levels and for instances with 15 and 20 activities.}
    \label{fig:preliminary_eval}
\end{figure}

Figure~\ref{fig:preliminary_eval} compares the average evaluation cost of the schedules produced by the different approaches under continuous and discrete adversarial budgeted uncertainty. Following Table~\ref{tab:baseline_compact_1520}, results are reported for uncertainty levels $u=30\%,50\%,70\%$.

In all cases, evaluation costs increase with the uncertainty level, reflecting the progressively more severe adversarial setting. In particular, the compact formulation consistently yields the lowest evaluation costs. By contrast, the nominal baselines exhibit a markedly different behavior. At low uncertainty levels ($u=30\%$), the $LB$ and $UB$ schedules attain similar evaluation costs, reflecting the limited impact of mild adversarial deviations. As uncertainty increases, however, the $LB$ formulation deteriorates rapidly, while the $UB$ approach remains more stable but still consistently underperforms relative to the compact model. This pattern reflects the fundamentally optimistic nature of $LB$ and the conservative design of $UB$.

Overall, these results indicate that explicitly incorporating budgeted uncertainty at the optimization stage leads to schedules that are more resilient under adversarial cost realizations. Unlike nominal formulations, which rely on either optimistic or fully pessimistic cost assumptions, the compact formulation directly limits how many activities may deviate simultaneously, resulting in solutions that achieve a better balance between robustness and conservatism.

\begin{table}[!ht]
\small
\centering
\begin{tabular}{cccccccc}
\toprule
 &  & 
& \multicolumn{1}{c}{\textbf{Eval. ($LB$)}} 
& \multicolumn{2}{c}{\textbf{Rel. change vs nominal (\%)}} 
& \multicolumn{2}{c}{\textbf{Eval. ($LB$)}} \\
\cmidrule(lr){4-4}\cmidrule(lr){5-6}\cmidrule(lr){7-8}
\textbf{$n$} & \textbf{$u$ (\%)} & \textbf{$\Gamma$}  & \multicolumn{1}{c}{$C$} 
& \multicolumn{1}{c}{$LB$} 
& \multicolumn{1}{c}{$UB$} 
& \multicolumn{1}{c}{$LB$} 
& \multicolumn{1}{c}{$UB$} \\
\midrule
\multirow{3}{*}{15} & 
30 & \multicolumn{1}{c|}{5}  & 115.45 & $-1.76$ & $3.79$ &  &  \\
& 50 & \multicolumn{1}{c|}{8}  & 117.05 & $-3.17$ & $2.46$ & 113.45 & 120.00 \\
& 70 & \multicolumn{1}{c|}{11} & 117.80 & $-3.83$ & $1.83$ &  &  \\
\midrule
\multirow{3}{*}{20} &
30 & \multicolumn{1}{c|}{6}  & 137.20 & $-1.70$ & $2.87$ &  &  \\
& 50 & \multicolumn{1}{c|}{10} & 139.55 & $-3.45$ & $1.20$ & 134.90 & 141.25 \\
& 70 & \multicolumn{1}{c|}{14} & 140.45 & $-4.11$ & $0.57$ &  &  \\
\bottomrule
\end{tabular}
\caption{Nominal $LB$ evaluation of compact solutions for different uncertainty levels and problem sizes. All values are averages over 20 instances.}
\label{tab:preliminary_nominalLB_eval}
\end{table}

Table~\ref{tab:preliminary_nominalLB_eval} reports the nominal performance of the solutions obtained with the compact formulation when evaluated under the nominal $LB$ model. The table compares the nominal evaluation of compact solutions with those obtained from the nominal $LB$ and $UB$ schedules, and reports the relative percentage change with respect to both benchmarks. Positive values indicate an improvement, whereas negative values correspond to a relative deterioration in nominal evaluation cost compared to the nominal model solutions.

The results confirm that explicitly incorporating uncertainty leads to a moderate deterioration in nominal performance when compared to the nominal $LB$ schedules. This deterioration increases with the uncertainty level, but remains below $5\%$ for all configurations considered. At the same time, the compact solutions consistently outperform the nominal $UB$. Taken together with the adversarial evaluation results, these findings indicate that the compact formulation achieves a meaningful trade-off: it accepts a controlled loss in nominal efficiency in exchange for a substantial gain in robustness against adverse cost realizations.

To conclude this section, we now turn to the case of discrete budgeted uncertainty and evaluate the iterative discrete approach ($ID$) described in Section~\ref{discrete}. In contrast to the continuous case, the adversarial problem involves binary cost increase variables, which prevents the derivation of a compact dual-based formulation. Consequently, $ID$ alternates between solving a master problem over a restricted scenario set and generating new worst-case scenarios through an adversarial subproblem.

This iterative structure results in a substantially higher computational burden than that of the compact formulation, as both the master and adversarial problems must be solved repeatedly over an expanding set of discrete scenarios. As a consequence, the scalability of the method is significantly more limited. Preliminary experiments show that, for problem sizes exceeding 15 activities, $ID$ fails to converge reliably within the imposed time limit, even for moderate uncertainty budgets. For this reason, the experimental analysis is restricted to instances with 5, 10, and 15 activities, which represent the largest problem sizes for which meaningful computational results can be consistently obtained.

Moreover, we tested a scenario enrichment strategy in which, in addition to the exact worst-case scenario for the current activity ordering, the adversary generates a small set of additional high-cost scenarios corresponding to the $K$ best candidates. These scenarios are added to the master problem in order to accelerate convergence by focusing the search on realizations that yield a significant bound improvement and are sufficiently distinct from those already considered, thereby avoiding the inclusion of redundant scenarios. Although this strategy marginally improves convergence in some cases, it does not alter the overall scalability behavior of the method nor the qualitative conclusions drawn from the baseline iterative approach.

\begin{table}[!htb]
    \small
    \centering
    \adjustbox{max width=\textwidth}{
    \begin{tabular}{ccccccccccc}
    \toprule
    & & & & & \multicolumn{4}{c}{\textbf{Avg.}}\\
    \cmidrule(lr){6-10}
    \textbf{$n$} & \textbf{$u$ (\%)} & \textbf{$\Gamma$} & \textbf{\#Opt.} & \textbf{TL (\%)} & \textbf{Obj. value} & \textbf{\#Iter.} & \textbf{Iter. best} & \textbf{Gap rel. (\%)} & \textbf{$t$ (min)} \\
    \midrule
    \multirow{2}{*}{5} 
        & 20 & \multicolumn{1}{c|}{1} & 20 & 0 & 47.75 & 2.90 & 1.85 & 0.000 & 0.01  \\
        & 30 & \multicolumn{1}{c|}{2} & 20 & 0 & 49.80 & 3.95 & 2.55 & 0.000 & 0.03  \\
    \midrule
    \multirow{2}{*}{10} 
        & 20 & \multicolumn{1}{c|}{2} & 19 & 5 & 88.25 & 7.70 & 4.25 & 0.064 & 14.98 \\
        & 30 & \multicolumn{1}{c|}{3} & 17 & 15 & 90.65 & 11.30 & 5.40 & 0.317 & 37.04 \\
    \midrule
    \multirow{2}{*}{15} 
        & 20 & \multicolumn{1}{c|}{3} & 4 & 80 & 125.20 & 9.15 & 4.75 & 2.643 & 112.55 \\
        & 30 & \multicolumn{1}{c|}{5} & 0 & 100 & 130.35 & 9.10 & 4.95 & 5.131 & 120.00 \\
    \bottomrule
    \end{tabular}
    }
    \caption{Iterative discrete solution method average results over 20 instances per configuration.}
    \label{tab:iterative_discrete}
\end{table}

Table 5 reports the average computational performance of the iterative discrete approach over 20 instances per configuration, for different problem sizes and uncertainty levels. In addition to standard indicators such as objective value, relative optimality gap, and computational time (in minutes), the table includes two iteration-related measures specific to the iterative nature of the method: the total number of iterations performed (\emph{\#Iter.}) and the iteration at which the best incumbent solution is found (\emph{Iter. best}). This distinction is relevant because the adversarial problem may yield higher objective values in later iterations, meaning that the best solution is not necessarily obtained at the final iteration.

Analyzing the results, we observe that for $n=5$ the iterative discrete approach consistently solves all instances to proven optimality within negligible computational time, regardless of the uncertainty level. In this setting, the number of iterations remains low and the relative optimality gaps are fully closed. As the problem size increases to $n=10$, the computational burden grows noticeably: although most instances are still solved to optimality (19 out of 20 for an uncertainty level of 20\% and 17 out of 20 for 30\%), the number of iterations increases substantially and the average computational time increases sharply.

The limitations of the iterative discrete approach become apparent for instances with $n=15$. Even under moderate uncertainty levels ($u=20\%$), only 4 instances are solved to proven optimality, and the average relative optimality gap remains sizable (2.6\%). For $u=30\%$, none of the instances can be solved to optimality within the allotted time. This behavior reflects the rapidly increasing cost of scenario generation and repeated master problem re-optimization as both the problem size and the uncertainty budget grow.

All schedules produced by $ID$ are evaluated under the nominal lower-bound model and under both adversarial problems ($\text{Cont.\ BU}$ and $\text{Disc.\ BU}$), exhibiting the same qualitative robustness--nominality trade-off observed for the compact formulation. The corresponding results are reported in Appendix~\ref{A} (Table~\ref{A:evaluation_id}).

The results discussed in this section indicate that, while both robust approaches perform well for small problem sizes, they become increasingly computationally demanding as the number of activities grows or as the level of uncertainty increases. In line with our theoretical complexity analysis, scalability limitations emerge more rapidly for $ID$, whereas the compact formulation exhibits more favorable computational behavior and remains tractable for larger instances, albeit with a growing burden at higher robustness levels. These observations motivate further improvements to both solution approaches through additional modeling and computational enhancements, which are discussed in the following sections.

\subsection{Enhancing the compact formulation: modeling and computational improvements}

Building on the computational insights obtained in the previous section, we now focus on strengthening the compact formulation to improve its performance for larger problem sizes and higher uncertainty budgets. To this end, we introduce a set of modeling and computational enhancements designed to reduce solution times and mitigate the growth of optimality gaps observed in challenging instances. In the following, we present a strengthened variant of the compact model and assess its computational impact.

The strengthened compact formulation ($C_{str}$) can be stated as follows. We first consider the incorporation of additional valid constraints into the baseline compact model~\eqref{(19)}--\eqref{(29)}. These constraints do not alter the set of integer-feasible solutions, but are designed to tighten the linear relaxation and accelerate the B\&B algorithm. In particular, all additional constraints are valid for the single-machine setting and are introduced solely to tighten the linear relaxation without excluding any integer-feasible solution.

The first strengthening constraint explicitly enforces the single-machine capacity at the time level:
\begin{equation}\label{(C1)}
\sum_{j \in J}\sum_{s=\max\{0,t-d_j+1\}}^{t} x_{js} \le 1,
\qquad \forall t \in \mathcal{T}.
\end{equation}
This constraint ensures that at most one activity can be processed at any given time. Although non-overlapping schedules are already enforced indirectly in the baseline formulation through sequencing constraints, the inclusion of this aggregated capacity constraint eliminates fractional solutions in which multiple activities appear partially active at the same time, thereby yielding a significantly stronger linear relaxation, particularly at the root node.

The second strengthening constraint targets the robust component of the formulation:
\begin{equation}\label{(C4)}
\eta_{jt} \le \hat{c}_{jt} x_{jt},
\qquad \forall j \in J,\ t \in \mathcal{T}.
\end{equation}
This constraint tightly links the deviation variables to the corresponding start-time decisions. While it is redundant for integer solutions, it prevents positive deviation values in the linear relaxation when the associated start time is not selected, thereby removing artificial flexibility in the robust layer and improving the convergence behavior of the B\&B algorithm.

Finally, we consider the transitivity constraints on the sequencing variables, as defined in Constraint~\eqref{(25)} of the baseline compact formulation,
\begin{equation}\nonumber
y_{ij} + y_{jk} \le 1 + y_{ik},
\qquad \forall i,j,k \in J \text{ pairwise different},
\end{equation}
which ensure consistency of the implied activity ordering. Since explicitly including all such constraints would lead to a cubic number of inequalities, we enforce transitivity dynamically by adding violated constraints as cutting planes during the B\&B process. This strategy preserves most of the strengthening effect of the transitivity constraints while keeping the formulation size computationally manageable.

Taken together, $C_{str}$ consists of the baseline compact model~\eqref{(19)}--\eqref{(29)}, augmented with constraints~\eqref{(C1)} and~\eqref{(C4)} and with transitivity constraints enforced dynamically through cutting planes. Lastly, we also consider a warm-started variant of $C_{str}$ to further accelerate the B\&B algorithm. Specifically, an initial incumbent solution is obtained by solving the nominal lower-bound model ($LB$) and provided to the solver at the start of the optimization process. This strategy does not alter the formulation itself, but can reduce the search effort by improving the initial bound.

Table~\ref{tab:strengthened_compact} summarizes the computational performance of the strengthened compact formulation, considering both the version without warm start (\emph{No WS}) and the variant initialized with the nominal lower-bound solution as a warm start (\emph{WS ($LB$)}). The table is organized into two main blocks corresponding to these two solution strategies and reports average results over 20 instances per configuration, using the same set of performance indicators as in the previous tables. Results are reported only for instances with 20, 25, and 30 activities, since for 15 activities the baseline compact formulation already exhibited very good computational performance, making further strengthening less informative. Both variants of $C_{str}$ are evaluated using the same uncertainty levels as those considered for $C$ in the previous section, ensuring that the effect of the proposed enhancements can be assessed on a directly comparable experimental setting.

\begin{table}[!htb]
    \small
    \centering
    \adjustbox{max width=\textwidth}{
    {\renewcommand{\arraystretch}{0.90}
    \begin{tabular}{ccc ccccc}
    \toprule
    & & & \multicolumn{5}{c}{\textbf{No WS}}\\
    \cmidrule(lr){4-8}
    \textbf{$n$} & \textbf{$u$ (\%)} & \textbf{$\Gamma$}
    & \textbf{\#Opt.} & \textbf{Obj. value} & \textbf{Root Bound} & \textbf{Gap rel. (\%)} & \textbf{$t$ (min)} \\
    \midrule
    \multirow{3}{*}{20} & 30 & \multicolumn{1}{c|}{6} & 20 & 155.90 & 147.94 & 0.004 & 6.29 \\
    & 50 & \multicolumn{1}{c|}{10} & 20 & 163.59 & 152.36 & 0.006 & 26.69 \\
    & 70 & \multicolumn{1}{c|}{14} & 17 & 168.36 & 155.72 & 0.279 & 42.24 \\
    \midrule
    \multirow{4}{*}{25} & 10 & \multicolumn{1}{c|}{3} & 20 & 180.06 & 175.25 & 0.001 & 9.68 \\
    & 20 & \multicolumn{1}{c|}{5} & 16 & 186.89 & 178.77 & 0.259 & 51.82 \\
    & 30 & \multicolumn{1}{c|}{8} & 6 & 194.46 & 182.73 & 1.614 & 99.54 \\
    & 50 & \multicolumn{1}{c|}{13} & 1 & 204.35 & 187.72 & 4.192 & 117.42 \\
    \midrule
    \multirow{3}{*}{30} & 10 & \multicolumn{1}{c|}{3} & 20 & 200.86 & 196.41 & 0.002 & 16.06 \\
    & 20 & \multicolumn{1}{c|}{6} & 5 & 211.90 & 201.98 & 1.562 & 106.04 \\
    & 30 & \multicolumn{1}{c|}{9} & 0 & 220.87 & 206.26 & 3.877 & 120.00 \\
    \midrule
     & & & \multicolumn{5}{c}{\textbf{WS ($LB$)}}\\
    \cmidrule(lr){4-8}
    \textbf{$n$} & \textbf{$u$ (\%)} & \textbf{$\Gamma$}
    & \textbf{\#Opt.} & \textbf{Obj. value} & \textbf{Root Bound} & \textbf{Gap rel. (\%)} & \textbf{$t$ (min)} \\
    \midrule
    \multirow{3}{*}{20} & 30 & \multicolumn{1}{c|}{6} & 20 & 155.90 & 147.94 & 0.004 & 6.26 \\
    & 50 & \multicolumn{1}{c|}{10} & 20 & 163.59 & 152.36 & 0.007 & 24.10 \\
    & 70 & \multicolumn{1}{c|}{14} & 17 & 168.31 & 155.72 & 0.259 & 45.29 \\
    \midrule
    \multirow{4}{*}{25} & 10 & \multicolumn{1}{c|}{3} & 20 & 180.06 & 175.25 & 0.002 & 8.89 \\
    & 20 & \multicolumn{1}{c|}{5} & 16 & 186.92 & 178.77 & 0.228 & 48.27 \\
    & 30 & \multicolumn{1}{c|}{8} & 7  & 194.41 & 182.73 & 1.484 & 98.47 \\
    & 50 & \multicolumn{1}{c|}{13} & 1  & 204.18 & 187.72 & 4.190 & 117.75 \\
    \midrule
    \multirow{3}{*}{30} & 10 & \multicolumn{1}{c|}{3} & 20 & 200.86 & 196.41 & 0.003 & 14.86 \\
    & 20 & \multicolumn{1}{c|}{6} & 5 & 212.18 & 201.98 & 1.681 & 106.05 \\
    & 30 & \multicolumn{1}{c|}{9} & 0 & 221.34 & 206.26 & 4.050 & 120.00 \\
    \bottomrule
    \end{tabular}}}
    \caption{Strengthened compact formulation average results over 20 instances per configuration, comparing runs without warm start (No WS) and with the nominal LB warm start (WS ($LB$)).}
    \label{tab:strengthened_compact}
\end{table}

From Table~\ref{tab:strengthened_compact}, it becomes apparent that the computational difficulty of the strengthened compact formulation is driven primarily by the level of uncertainty rather than by the problem size alone. In particular, when the uncertainty level is low ($u=10\%$), all instances are solved to proven optimality across all problem sizes considered, including the largest instances with $n=30$. As the uncertainty budget increases, the problem becomes more challenging, with solution times growing and relative gaps increasing. For $n=20$, however, the strengthened formulation remains highly effective even at the highest uncertainty level ($u=70\%$), yielding very small relative gaps (below 0.3\%) and solving nearly all instances to optimality. By contrast, for larger problem sizes ($n=25$ and $n=30$), higher uncertainty levels lead to a noticeable deterioration in performance, reflected in fewer optimal solutions and substantially larger gaps. For example, for the 30-activity instances with $u=30\%$, no instance is solved to optimality within the time limit, and the average relative gap exceeds 3.8\%.

The table also highlights the effect of incorporating a warm start based on the nominal lower-bound solution. Although this initial solution is obtained from an optimistic model that ignores cost uncertainty, it often leads to noticeable reductions in computational time while preserving comparable solution quality. For example, for $n=20$ and $u=50\%$, the average solution time decreases from 26.7 to 24.1 minutes when using the warm start, and similar time reductions are observed for larger problem sizes under low uncertainty levels, such as $n=25$ and $n=30$ with $u=10\%$. As the uncertainty budget increases, the benefit of the warm start becomes less systematic: in some cases it yields slightly smaller relative gaps, while in others the computational time remains unchanged or even increases, as observed for $n=20$ and $u=70\%$. Overall, these results indicate that the $LB$ warm start can be effective beyond purely low-uncertainty regimes, although its impact naturally diminishes as the robust component increasingly dominates the solution process.

Table~\ref{tab:improvements_compact_ws} summarizes the relative improvements achieved by the strengthened compact formulation with respect to the baseline compact model, expressed as percentage changes in objective value (\emph{$\Delta$ Obj.}), optimality gap (\emph{$\Delta$ Gap}), solution time (\emph{$\Delta$ Time}), and root bound (\emph{$\Delta$ Root}). Results are shown separately for \emph{No WS} and \emph{WS ($LB$)}, with positive values indicating improvements over the baseline formulation.

\begin{table}[!htb]
    \small
    \centering
    \adjustbox{max width=\textwidth}{
    \setlength{\tabcolsep}{7.5pt} 
    \begin{tabular}{ccc ccc ccc c}
    \toprule
    & & & \multicolumn{3}{c}{\textbf{No WS}} & \multicolumn{3}{c}{\textbf{WS ($LB$)}} &\\
    \cmidrule(lr){4-6}\cmidrule(lr){7-9}
    \textbf{$n$} & \textbf{$u$ (\%)} & \textbf{$\Gamma$} 
    & $\Delta\ Obj.$ & $\Delta\ Gap$ & $\Delta\ Time$
    & $\Delta\ Obj.$ & $\Delta\ Gap$ & $\Delta\ Time$ 
    & $\Delta\ Root$  \\
    \midrule
    \multirow{3}{*}{20} 
    & 30 & \multicolumn{1}{c|}{6} & 0.00 & 0.00 & 73.97 & 0.00 & 0.00 & 74.09 & 3.33 \\
    & 50 & \multicolumn{1}{c|}{10} & 0.01 & 98.14 & 56.59 & 0.01 & 97.83 & 60.81 & 3.24  \\
    & 70 & \multicolumn{1}{c|}{14} & 0.14 & 77.26 & 48.47 & 0.17 & 78.89 & 44.75 & 3.13 \\
    \midrule
    \multirow{4}{*}{25} 
    & 10 & \multicolumn{1}{c|}{3} & 0.07 & 99.74 & 79.37 & 0.07 & 99.49 & 81.05 & 3.60\\
    & 20 & \multicolumn{1}{c|}{5} & 0.46 & 86.00 & 41.24 & 0.44 & 87.68 & 45.27 & 3.55\\
    & 30 & \multicolumn{1}{c|}{8} & 0.33 & 53.11 & 8.07 & 0.35 & 56.89 & 9.06 & 3.41 \\
    & 50 & \multicolumn{1}{c|}{13} & 0.44 & 29.90 & 2.15 & 0.52 & 29.93 & 1.88 & 3.22\\
    \midrule
    \multirow{3}{*}{30} 
    & 10 & \multicolumn{1}{c|}{3} & 0.13 & 99.67 & 77.63 & 0.13 & 99.50 & 79.30 & 3.26\\
    & 20 & \multicolumn{1}{c|}{6} & 0.91 & 64.47 & 10.43 & 0.78 & 61.76 & 10.42 & 3.28 \\
    & 30 & \multicolumn{1}{c|}{9} & 1.32 & 44.84 & 0.00 & 1.11 & 42.38 & 0.00 & 3.32\\
    \bottomrule
    \end{tabular}}
    \caption{Relative improvements (in \%) of the strengthened compact formulation compared to the baseline compact model, reported separately without warm start (No WS) and with the nominal LB warm start (WS (LB)). Positive values indicate improvements (higher root bounds, lower optimality gaps, and shorter solution times).}
    \label{tab:improvements_compact_ws}
\end{table}

From the table, we observe that the most pronounced gains are achieved in terms of computational time and optimality gap. For low levels of uncertainty, solution times are reduced by more than 70\%, with peak reductions close to 80\%. At the same time, gap reductions are often close to 100\%, corresponding to configurations in which the strengthened compact formulation---both with and without warm start---attains proven optimality while the baseline compact model does not. For moderate uncertainty budgets, $C_{str}$ continues to yield substantial gap reductions; for instance, for $n=25$ and a 20\% level of uncertainty the gap reduction reaches 86\%, while at a 30\% uncertainty level it remains as high as about 53\%.

As the problem size and the robustness level increase, time reductions naturally diminish, since both formulations tend to exhaust the allocated $2h$ time limit in the most challenging settings. Nevertheless, even under these highly robust conditions, $C_{str}$ consistently achieves meaningful reductions in the final optimality gap, typically on the order of 30--45\%, reflecting a markedly improved convergence behavior. Notably, for the 20-activity instances, even the highest uncertainty level yields very strong improvements, with relative gap reductions above 77\% and computational time reductions close to 50\%.

Improvements in objective value remain modest throughout, with maximum gains around 1\%. In contrast, the root bound is systematically improved by slightly more than 3\% across all configurations, independently of the use of the warm start. This tighter linear relaxation provides a natural explanation for the observed reductions in optimality gaps and solution times. Regarding warm starting, the effect of \emph{WS ($LB$)} is generally secondary and not uniform across metrics, confirming that the primary performance gains stem from the strengthened formulation itself.

Overall, these results confirm that the proposed strengthening strategies substantially enhance the practical scalability of the compact formulation, allowing a much wider range of instances to be solved exactly and significantly improving solution quality and convergence behavior in challenging settings. To further explore the limits of these exact approaches, we finally consider larger instances with 35 and 40 activities. For these instances, the time limit is increased to 3 hours in order to account for the sharp growth in combinatorial complexity induced by both the problem size and the uncertainty budget. The analysis focuses on both strengthened compact formulations, with and without warm-starting from the nominal lower-bound solution, and considers uncertainty levels $u \in \{10\%, 20\%\}$. As in the previous experiments, performance is evaluated in terms of solution quality, optimality gaps, the number of instances solved to proven optimality, and required time.

\begin{table}[!htb]
    \small
    \centering
    \adjustbox{max width=\textwidth}{
    \begin{tabular}{ccc cccc cccc}
    \toprule
    & & & \multicolumn{4}{c}{\textbf{No WS}} & \multicolumn{4}{c}{\textbf{WS ($LB$)}} \\
    \cmidrule(lr){4-7}\cmidrule(lr){8-11}
    \textbf{$n$} & \textbf{$u$ (\%)} & \textbf{$\Gamma$}
    & \textbf{\#Opt.} & \textbf{Obj.} & \textbf{Gap rel. (\%)} & \textbf{$t$ (h)}
    & \textbf{\#Opt.} & \textbf{Obj.} & \textbf{Gap rel. (\%)} & \textbf{$t$ (h)} \\
    \midrule
    \multirow{2}{*}{35} 
    & 10 & \multicolumn{1}{c|}{4} & 13 & 237.82 & 0.314 & 1.72 & 14 & 237.80 & 0.264 & 1.61 \\
    & 20 & \multicolumn{1}{c|}{7} & 0 & 250.03 & 3.152 & 3.00 & 0 & 249.66 & 2.950 & 3.00 \\
    \midrule
    \multirow{2}{*}{40} 
    & 10 & \multicolumn{1}{c|}{4} & 12 & 254.29 & 0.591 & 2.14 & 11 & 254.20 & 0.599 & 2.09 \\
    & 20 & \multicolumn{1}{c|}{8} & 0 & 272.15 & 4.959 & 3.00 & 0 & 270.26 & 4.123 & 3.00 \\
    \bottomrule
    \end{tabular}}
    \caption{Average results over 20 instances for larger problem sizes (35 and 40 activities) with a time limit of 3 hours, comparing the improved compact formulation without warm start (No WS) and with the nominal LB warm start (WS ($LB$)).}
    \label{tab:improved_compact_3540}
\end{table}

The results reported in Table~\ref{tab:improved_compact_3540} illustrate the scalability limits of the strengthened compact formulation for larger problem sizes. For instances with 35 and 40 activities and a low uncertainty level ($u=10\%$), the formulation remains effective: 13--14 out of 20 instances are solved to proven optimality for $n=35$, and 11--12 instances for $n=40$, with small average optimality gaps (below 0.3\% and 0.6\%, respectively) and solution times well below the 3-hour limit. These results confirm that the proposed $C_{str}$ strategies substantially extend the range of instances that can be tackled exactly, enabling the solution of problem sizes that are already well beyond the reach of the baseline compact formulation under comparable conditions. As the uncertainty level increases to $u=20\%$, exact solvability deteriorates markedly, and no instance is solved to optimality within the time limit for either problem size. Nevertheless, high-quality incumbent solutions are consistently obtained, with average relative gaps around 3\% for $n=35$ and between 4\% and 5\% for $n=40$.

Overall, these results indicate that the practical limitation of the strengthened compact formulation is driven primarily by the level of uncertainty rather than by the number of activities alone. While large-scale instances can still be handled effectively under low uncertainty, high uncertainty budgets rapidly push the problem beyond the limits of exact optimization. At the same time, the ability of the proposed formulation to deliver high-quality solutions with relatively small optimality gaps in these challenging regimes highlights its practical relevance and robustness, providing a solid foundation for the development of complementary solution strategies for highly uncertain large-scale instances.

For completeness, we also assess the schedules obtained with the strengthened compact formulations, both \emph{No WS} and \emph{WS ($LB$)}, under the nominal lower-bound model and under the adversarial settings introduced earlier. A detailed discussion of these evaluations is provided in Appendix~\ref{A} (Table~\ref{A:evaluation_compact}), where a behavior consistent with that observed for the baseline compact formulation is reported.

\subsection{Extending the strengthening ideas to the iterative discrete approach}\label{improved-iterative}

The strengthening ideas introduced for the compact formulation can be partially transferred to the iterative discrete approach. In particular, the single-machine capacity constraint~\eqref{(C1)} is incorporated into the master problem~\eqref{(30)}--\eqref{(37)}, and transitivity is enforced dynamically through cutting planes, following the same rationale adopted for $C_{str}$. These enhancements aim at tightening the master problem relaxation and improving convergence by reducing the number of required iterations.

Tables~\ref{tab:iterative_discrete_improved} and~\ref{tab:improvements_iterative} summarize the computational performance of the strengthened iterative discrete approach ($ID_{str}$) and the corresponding relative improvements with respect to the baseline method. For instances with $n=10$, $ID_{str}$ solves almost all instances to proven optimality and yields a clear reduction in computational time, amounting to about $18\%$ for $u=20\%$ and exceeding $25\%$ for $u=30\%$. In contrast, objective values and relative optimality gaps remain essentially unchanged, indicating that the proposed enhancements primarily improve convergence speed without affecting solution quality for this problem size.

\begin{table}[!htb]
    \small
    \centering
    \adjustbox{max width=\textwidth}{
    \begin{tabular}{ccccccccccc}
    \toprule
    & & & & & \multicolumn{4}{c}{\textbf{Avg.}}\\
    \cmidrule(lr){6-10}
    \textbf{$n$} & \textbf{$u$ (\%)} & \textbf{$\Gamma$} & \textbf{\#Opt.} & \textbf{TL (\%)} & \textbf{Obj. value} & \textbf{\#Iter.} & \textbf{Iter. best} & \textbf{Gap rel. (\%)} & \textbf{$t$ (min)} \\
    \midrule
    \multirow{2}{*}{10} 
        & 20 & \multicolumn{1}{c|}{2} & 19 & 5 & 88.25 & 8.25 & 4.15 & 0.064 & 12.35 \\
        & 30 & \multicolumn{1}{c|}{3} & 17 & 15 & 90.65 & 11.25 & 5.60 & 0.317 & 27.68 \\
    \midrule
    \multirow{2}{*}{15} 
        & 20 & \multicolumn{1}{c|}{3} & 7 & 65 & 125.05 & 10.05 & 5.75 & 1.740 & 89.68 \\
        & 30 & \multicolumn{1}{c|}{5} & 1 & 95 & 130.25 & 10.30 & 5.80 & 4.471 & 119.99 \\
    \bottomrule
    \end{tabular}
    }
    \caption{Strengthened iterative discrete solution method average results over 20 instances per configuration.}
    \label{tab:iterative_discrete_improved}
\end{table}

For $n=15$, where the baseline iterative method already exhibits pronounced scalability limitations, the impact of the strengthening is more noticeable but still limited. At an uncertainty level of $u=20\%$, the number of instances solved to optimality increases from 4 to 7 (see Table~\ref{tab:iterative_discrete}), while the average relative optimality gap is reduced by more than $34\%$ and the average solution time decreases by approximately $20\%$. This indicates that tightening the master problem helps mitigate the degradation observed for moderately difficult instances.

At the higher uncertainty level ($u=30\%$), however, the benefits become marginal. The strengthened method is able to solve only one instance to proven optimality, reflecting that the intrinsic complexity of the iterative discrete framework dominates in highly uncertain settings. Nonetheless, compared to the baseline method, which fails to prove optimality for any of the 20 instances, $ID_{str}$ still achieves a reduction of more than $12\%$ in the average relative optimality gap. Overall, these results confirm that the strengthening improves convergence in moderately difficult cases, but does not fundamentally alter the scalability limitations of the approach.

\begin{table}[!htb]
    \small
    \centering
    \adjustbox{max width=\textwidth}{
    \setlength{\tabcolsep}{8pt}
    \begin{tabular}{ccc ccc}
    \toprule
    & & & \multicolumn{3}{c}{\textbf{Improvement}}\\
    \cmidrule(lr){4-6}
    \textbf{$n$} & \textbf{$u$ (\%)} & \textbf{$\Gamma$} 
    & $\Delta\ Obj.$ & $\Delta\ Gap$ & $\Delta\ Time$ \\
    \midrule
    \multirow{2}{*}{10}
    & 20 & \multicolumn{1}{c|}{2} & 0.00 & 0.00 & 17.56 \\
    & 30 & \multicolumn{1}{c|}{3} & 0.00 & 0.00 & 25.27 \\
    \midrule
    \multirow{2}{*}{15}
    & 20 & \multicolumn{1}{c|}{3} & 0.12 & 34.17 & 20.32 \\
    & 30 & \multicolumn{1}{c|}{5} & 0.08 & 12.86 & 0.01 \\
    \bottomrule
    \end{tabular}}
    \caption{Relative improvements (in \%) of the strengthened iterative discrete formulation compared to the baseline iterative method, for different problem sizes and uncertainty levels. Positive values indicate improvements in solution quality, optimality gap, and computational time.}
    \label{tab:improvements_iterative}
\end{table}

As in the previous formulations, the schedules produced by $ID_{str}$ are evaluated under the three models: \emph{LB}, \emph{Cont.\ BU}, and \emph{Disc.\ BU}. The corresponding results are reported in Appendix~\ref{A} (Table~\ref{A:evaluation_id}). In particular, we highlight the 15-activity instance set with an uncertainty level of $30\%$, which represents the largest tested scenario for which a direct comparison between the baseline compact formulation and the iterative discrete approaches ($ID$ and $ID_{str}$) is possible.

A closer inspection of the results reported in Appendix~\ref{A} (see Tables~\ref{A:evaluation_compact} and~\ref{A:evaluation_id}) reveals that, although the iterative discrete approaches are explicitly designed to address discrete budgeted uncertainty, they do not yield a significant improvement in solution quality when evaluated under \emph{Disc.\ BU}. In particular, both $ID$ and $ID_{str}$ attain very similar discrete adversarial evaluations, with average costs of 130.35 and 130.25, respectively. By comparison, $C$ achieves a slightly lower average discrete adversarial cost of 130.00 for the same instances.

This observation shows that, even under discrete adversarial evaluations, the compact formulation, which is originally developed for continuous budgeted uncertainty, produces solutions that are at least competitive with those obtained by the iterative discrete approach. When combined with its superior scalability and more stable convergence behavior, this result further highlights the effectiveness of the compact formulation as an exact solution method, even in settings where discrete uncertainty realizations are of interest.

Overall, the proposed strengthening strategies lead to consistent improvements in convergence and solution quality for the iterative discrete approach, whose adversarial subproblem is NP-hard (see Section~\ref{discrete}). Although the inherent complexity of discrete budgeted uncertainty ultimately limits scalability, the enhanced variants are able to exploit tighter relaxations and reduce computational effort in the range of instances where exact optimization remains viable.

\section{Conclusions}
\label{conclusion}
In this paper, we study a single-machine scheduling problem in which activity costs may vary throughout their processing time and are subject to uncertainty. We prove that this problem is NP-hard and NP-hard to approximate. Since the cost matrix is explicitly part of the input, the time horizon parameter $T$ is polynomial in the input size, which requires the use of a strongly NP-hard problem to establish the complexity result.

We then extend this setting to a two-stage robust optimization framework, where the ordering of activities is determined in the first stage and the corresponding start times are decided in the second stage once cost realizations are revealed. Exploiting the polynomial-time solvability of the scheduling problem when precedence constraints are fixed, we derive exact formulations for the adversarial problems under both continuous and discrete budgeted uncertainty. In the continuous case, the adversarial problem can be formulated as a linear program, which enables the construction of a compact mixed-integer formulation for the overall two-stage problem. For the discrete case, we demonstrate that the adversarial problem is NP-hard, and we therefore propose an iterative solution approach that alternates between solving the two-stage problem over a restricted set of scenarios and generating worst-case scenarios through the adversarial problem. As an open problem, we conjecture the robust problem with discrete budgeted uncertainty to be even $\Sigma^p_2$-hard.

In our computational study, we compare the nominal benchmarks ($LB$ and $UB$), the compact robust formulation under continuous budgeted uncertainty ($C$), and the iterative discrete approach for discrete budgeted uncertainty ($ID$). The nominal models are solved essentially instantaneously across all tested sizes up to $n=40$, confirming that the computational bottleneck stems from the robust counterparts rather than from the underlying single-machine scheduling structure. For the robust formulations, performance is primarily driven by the uncertainty budget $\Gamma$: while $C$ remains tractable on small instances even at high uncertainty, medium-sized instances increasingly reach the time limit as the uncertainty level grows. In contrast, $ID$ is empirically limited to small instances, as it requires repeated master re-optimization together with the solution of an NP-hard adversarial subproblem.

From a solution-quality perspective, evaluating schedules under a common framework (nominal $LB$ and adversarial \emph{Cont.\ BU} and \emph{Disc.\ BU}) shows that explicitly incorporating budgeted uncertainty at the optimization stage yields schedules that are substantially more resilient than nominal baselines. In particular, the compact approach consistently improves adversarial evaluations compared to $LB$ and $UB$, while the loss in nominal performance remains controlled (within a few percent in the tested settings). Moreover, compact solutions remain competitive relative to $UB$ also under nominal evaluation, confirming a practically meaningful robustness--nominality trade-off.

Finally, we assess the impact of strengthening strategies for both robust approaches. For the compact formulation, the strengthened variants (\emph{No WS} and \emph{WS (LB)}) significantly extend practical solvability: adding valid inequalities and enforcing transitivity dynamically yields systematic improvements in the root relaxation and translates into large reductions in runtime and optimality gaps, enabling the exact solution of substantially larger instances under low-to-moderate uncertainty and producing markedly tighter gaps when the time limit is reached. Warm-starting from $LB$ provides additional but secondary benefits. For discrete budgeted uncertainty, the strengthened iterative method ($ID_{str}$) improves convergence in moderately difficult cases, but does not fundamentally change scalability. Notably, even under discrete adversarial evaluation, the compact formulation produces solutions that are competitive with those obtained by the iterative approach, while offering superior scalability overall.

Several directions for future research follow naturally from our findings. A first avenue is to extend the model to richer scheduling settings, including additional resources, release dates, or multiple execution modes. A second direction is the development of tailored heuristic or metaheuristic methods for large-scale instances and high uncertainty budgets, where exact optimization becomes challenging. Finally, it would be natural to incorporate post-optimization flexibility through recoverable robustness, allowing limited adjustments of the first-stage ordering after costs are revealed and thereby strengthening practical applicability in highly uncertain environments.

\section*{Acknowledgments}
The authors thank the grants PID2021-122344NB-I00 and PID2022-136383NB-I00 funded by MICIU/AEI/ 10.13039/501100011033 and by ERDF/EU. This work was also supported by the Conselleria de Educación, Cultura, Universidades y Empleo, Generalitat Valenciana, Spain, under grants PROMETEO/2021/063, CIPROM/2024/34 and CIGE/2024/57.

\section*{Data and code availability statement}
All instances utilized in this study, together with the code used to generate the instances and to run the computational experiments, are available from the GitHub repository at the following link: \url{https://github.com/soofiarodriguez/Robust-Scheduling-Uncertainty}.

\appendix
\clearpage

\section{Full Tables of Computational Results}
\label{A}
\renewcommand{\thefigure}{A\arabic{figure}}  
\renewcommand{\thetable}{A\arabic{table}}    
\setcounter{figure}{0}  
\setcounter{table}{0} 
\renewcommand{\theHtable}{A\arabic{table}} 

This appendix reports the complete set of evaluation results for all solution approaches considered in the paper. Specifically, given a fixed schedule, we evaluate its nominal cost by solving the nominal lower-bound problem (\emph{LB}), and its worst-case performance by solving the two adversarial budgeted-uncertainty evaluation models introduced in Section~\ref{methodology}: the continuous adversary (\emph{Cont.\ BU}) and the discrete adversary (\emph{Disc.\ BU}). These evaluations provide a consistent basis for comparing nominal performance and robustness across the different optimization approaches.

\begin{table}[!htb]
    \small
    \centering
    \adjustbox{max width=\textwidth}{
    {\renewcommand{\arraystretch}{0.90}
    \begin{tabular}{ccc ccc ccc}
    \toprule
    & & & \multicolumn{6}{c}{\textbf{Evaluation}} \\
    \cmidrule(lr){4-9}
    & & & \multicolumn{3}{c}{\textbf{$LB$}} & \multicolumn{3}{c}{\textbf{$UB$}}\\
    \cmidrule(lr){4-6} \cmidrule(lr){7-9}
    \textbf{$n$} & \textbf{$u$ (\%)} & \textbf{$\Gamma$} &
    $LB$ & $Cont.\ BU$ & $Disc.\ BU$ &
    $LB$ & $Cont.\ BU$ & $Disc.\ BU$ \\
    \midrule

    \multirow{2}{*}{5}
        & 20 & \multicolumn{1}{c|}{1}  
        & \multirow{2}{*}{\centering 44.25} & 48.31 & 47.95 & 
        \multirow{2}{*}{\centering 44.95} & 48.41 & 48.05 \\
        & 30 & \multicolumn{1}{c|}{2}  &  & 51.09 & 50.30 &   & 50.91 & 50.15 \\
    \midrule

    \multirow{2}{*}{10}
        & 20 & \multicolumn{1}{c|}{2}  & 
        \multirow{2}{*}{\centering 80.80} & 89.91 & 89.20 & 
        \multirow{2}{*}{\centering 85.05 } & 92.04 & 91.05 \\
        & 30 & \multicolumn{1}{c|}{3}  &  & 93.56 & 92.25 &   & 94.17 & 93.05 \\
    \midrule

    \multirow{4}{*}{15}
        & 20 & \multicolumn{1}{c|}{3}  & 
        \multirow{4}{*}{\centering 113.45} & 127.11 & 126.35 & 
        \multirow{4}{*}{\centering 120.00} & 129.76 & 129.10 \\
        & 30 & \multicolumn{1}{c|}{5}  &   & 134.01 & 132.55 &   & 133.93 & 132.90 \\
        & 50 & \multicolumn{1}{c|}{8}  &   & 141.31 & 139.30 &   & 138.35 & 137.15 \\
        & 70 & \multicolumn{1}{c|}{11} &   & 146.41 & 144.25 &   & 141.56 & 140.25 \\
    \midrule

    \multirow{3}{*}{20}
        & 30 & \multicolumn{1}{c|}{6}  & 
        \multirow{3}{*}{\centering 134.90} & 159.23 & 157.40 & 
        \multirow{3}{*}{\centering 141.25} & 158.80 & 157.60 \\
        & 50 & \multicolumn{1}{c|}{10} &    & 169.07 & 166.35 &   & 165.16 & 163.60 \\
        & 70 & \multicolumn{1}{c|}{14} &    & 176.54 & 173.60 &   & 169.63 & 168.05 \\
    \midrule

    \multirow{4}{*}{25}
        & 10 & \multicolumn{1}{c|}{3}  & 
        \multirow{4}{*}{\centering 166.25} & 180.93 & 180.25 & 
        \multirow{4}{*}{\centering 175.95} & 186.85 & 186.00 \\
        & 20 & \multicolumn{1}{c|}{5}  &   & 189.60 & 187.75 &   & 191.65 & 190.55 \\
        & 30 & \multicolumn{1}{c|}{8}  &   & 200.14 & 197.10 &   & 197.49 & 196.05 \\
        & 50 & \multicolumn{1}{c|}{13} &   & 213.35 & 209.80 &   & 205.09 & 202.95 \\
    \midrule
    
    \multirow{3}{*}{30}
        & 10 & \multicolumn{1}{c|}{3}  & 
        \multirow{3}{*}{\centering 186.80} & 201.37 & 200.80 & 
        \multirow{3}{*}{\centering 199.75} & 210.51 & 209.40 \\
        & 20 & \multicolumn{1}{c|}{6}  &   & 214.17 & 212.45 &    & 218.02 & 216.55 \\
        & 30 & \multicolumn{1}{c|}{9}  &   & 224.65 & 222.05 &    & 224.03 & 222.30 \\
    \midrule
    
        \multirow{2}{*}{35}
        & 10 & \multicolumn{1}{c|}{4}  & 
        \multirow{2}{*}{\centering 218.70} & 238.41 & 237.75 & 
        \multirow{2}{*}{\centering 234.80} & 249.62 & 248.75 \\
        & 20 & \multicolumn{1}{c|}{7}  &  & 251.36 & 249.90 &  & 257.09 & 255.65 \\
    \midrule
    
    \multirow{2}{*}{40}
        & 10 & \multicolumn{1}{c|}{4}  & 
        \multirow{2}{*}{\centering 235.55} & 255.03 & 254.50 & 
        \multirow{2}{*}{\centering 251.90} & 266.99 &  265.55\\
        & 20 & \multicolumn{1}{c|}{8}  &   & 272.35 & 270.60 &  & 276.69 & 274.55 \\
    \bottomrule
    \end{tabular}}}
    \caption{Evaluation results for the nominal lower and upper bound optimization models, $LB$ and $UB$, respectively. For each $n$, the $LB$ value is constant across uncertainty levels and is therefore reported once per block.}
    \label{A:evaluation_nominal}
\end{table}

Table~\ref{A:evaluation_nominal} reports the evaluation results of the nominal lower-bound (\emph{LB}) and upper-bound (\emph{UB}) schedules under nominal costs as well as under continuous and discrete adversarial budgeted uncertainty. As expected, \emph{LB} yields the lowest nominal costs but deteriorates rapidly as the uncertainty level increases, particularly for larger instances. In contrast, \emph{UB} exhibits more stable adversarial evaluations due to its fully pessimistic cost assumptions, at the expense of consistently higher nominal costs. These results illustrate the inherent trade-off between nominal efficiency and robustness when uncertainty is not explicitly incorporated at the optimization stage.

\begin{table}[!htb]
    \small
    \centering
    \adjustbox{max width=\textwidth}{
    \begin{tabular}{ccc ccc ccc ccc}
    \toprule
    & & & \multicolumn{9}{c}{\textbf{Evaluation}} \\
    \cmidrule(lr){4-12}
    & & & & & & \multicolumn{6}{c}{$C_{str}$} \\
    & & & \multicolumn{3}{c}{\textbf{$C$}}
          & \multicolumn{3}{c}{\textbf{No WS}}
          & \multicolumn{3}{c}{\textbf{WS ($LB$)}} \\
    \cmidrule(lr){4-6} \cmidrule(lr){7-9} \cmidrule(lr){10-12}
    \textbf{$n$} & \textbf{$u$ (\%)} & \textbf{$\Gamma$} &
    $LB$ & $Cont.\ BU$ & $Disc.\ BU$ &
    $LB$ & $Cont.\ BU$ & $Disc.\ BU$ &
    $LB$ & $Cont.\ BU$ & $Disc.\ BU$ \\
    \midrule
    \multirow{3}{*}{15}
        & 30 & \multicolumn{1}{c|}{5}  & 115.45 & 131.46 & 130.00 & -- & -- & -- & -- & -- & -- \\
        & 50 & \multicolumn{1}{c|}{8}  & 117.05 & 136.86 & 135.10 & -- & -- & -- & -- & -- & -- \\
        & 70 & \multicolumn{1}{c|}{11} & 117.80 & 140.67 & 139.00 & -- & -- & -- & -- & -- & -- \\
    \midrule
    \multirow{3}{*}{20}
        & 30 & \multicolumn{1}{c|}{6}  & 137.20 & 155.90 & 154.25 & 137.20 & 155.90 & 154.25 & 138.53 & 157.24 & 155.47 \\
        & 50 & \multicolumn{1}{c|}{10} & 139.55 & 163.61 & 161.65 & 139.25 & 163.59 & 161.55 & 139.40 & 163.59 & 161.55 \\
        & 70 & \multicolumn{1}{c|}{14} & 140.45 & 168.59 & 166.25 & 140.35 & 168.36 & 166.15 & 140.35 & 168.31 & 166.05 \\
    \midrule

    \multirow{4}{*}{25}
        & 10 & \multicolumn{1}{c|}{3}  & 167.20 & 180.19 & 179.15 & 167.00 & 180.06 & 179.15 & 167.00 & 180.06 & 179.15 \\
        & 20 & \multicolumn{1}{c|}{5}  & 167.95 & 187.01 & 185.37 & 168.85 & 186.89 & 184.95 & 169.10 & 186.92 & 185.10 \\
        & 30 & \multicolumn{1}{c|}{8}  & 171.20 & 195.10 & 192.30 & 170.80 & 194.45 & 191.80 & 170.50 & 194.41 & 191.65 \\
        & 50 & \multicolumn{1}{c|}{13} & 173.95 & 205.03 & 201.95 & 174.00 & 204.35 & 201.20 & 173.75 & 204.11 & 201.30 \\
    \midrule

    \multirow{3}{*}{30}
        & 10 & \multicolumn{1}{c|}{3}  & 187.25 & 201.12 & 200.40 & 187.10 & 200.86 & 200.05 & 187.05 & 200.86 & 200.10 \\
        & 20 & \multicolumn{1}{c|}{6}  & 190.80 & 213.85 & 212.15 & 188.80 & 211.88 & 210.20 & 188.60 & 212.17 & 210.60 \\
        & 30 & \multicolumn{1}{c|}{9}  & 195.75 & 224.04 & 221.00 & 192.90 & 220.82 & 218.25 & 191.45 & 221.26 & 219.10 \\
    \midrule

    \multirow{2}{*}{35}
        & 10 & \multicolumn{1}{c|}{4}  & -- & -- & -- & 219.15 & 237.82 & 237.20 & 219.20 & 237.80 & 237.05 \\
        & 20 & \multicolumn{1}{c|}{7}  & -- & -- & -- & 222.25 & 250.03 & 247.90 & 220.25 & 249.64 & 247.55 \\
    \midrule

    \multirow{2}{*}{40}
        & 10 & \multicolumn{1}{c|}{4}  & -- & -- & -- & 236.10 & 254.27 & 253.60 & 235.95 & 254.20 & 253.50 \\
        & 20 & \multicolumn{1}{c|}{8}  & -- & -- & -- & 242.05 & 272.00 & 269.35 & 237.55 & 270.24 & 267.80 \\
    \bottomrule
    \end{tabular}}
    \caption{Evaluation results for the compact formulation ($C$) and its strengthened ($C_{str}$) variants, with (WS ($LB$)) and without (No WS) warm start. }
    \label{A:evaluation_compact}
\end{table}

Table~\ref{A:evaluation_compact} summarizes the evaluation results for the schedules obtained with the compact formulation ($C$) and its strengthened variants ($C_{str}$), both with and without warm starting, namely \emph{WS (LB)} and \emph{No WS}, respectively. Across all tested configurations, the compact solutions exhibit a balanced performance profile, achieving substantially lower evaluation costs under \emph{Cont.\ BU} and \emph{Disc.\ BU} than the nominal $LB$ and $UB$ schedules, while incurring only a moderate loss in nominal performance. The strengthened variants preserve this trade-off and, in some cases, slightly improve adversarial evaluations, confirming that the proposed enhancements primarily affect convergence behavior and solution quality without materially altering the robustness characteristics of the resulting schedules.

\begin{table}[!htb]
    \small
    \centering
    \adjustbox{max width=\textwidth}{
    {\renewcommand{\arraystretch}{0.90}
    \begin{tabular}{ccc ccc ccc}
    \toprule
    & & & \multicolumn{6}{c}{\textbf{Evaluation}} \\
    \cmidrule(lr){4-9}
    & & & \multicolumn{3}{c}{\textbf{$ID$}} & \multicolumn{3}{c}{$ID_{str}$}\\
    \cmidrule(lr){4-6} \cmidrule(lr){7-9}
    \textbf{$n$} & \textbf{$u$ (\%)} & \textbf{$\Gamma$} &
    $LB$ & $Cont.\ BU$ & $Disc.\ BU$ &
    $LB$ & $Cont.\ BU$ & $Disc.\ BU$ \\
    \midrule

    \multirow{2}{*}{5}
        & 20 & \multicolumn{1}{c|}{1}  & 44.70 & 48.40 & 47.75 & -- & -- & -- \\
        & 30 & \multicolumn{1}{c|}{2}  & 44.95 & 50.88 & 49.80 & --  & -- & -- \\
    \midrule

    \multirow{2}{*}{10}
        & 20 & \multicolumn{1}{c|}{2}  & 81.85 & 89.80 & 88.25 & 81.80 & 89.71 & 88.25 \\
        & 30 & \multicolumn{1}{c|}{3}  & 82.45 & 92.53 & 90.65 & 82.40 & 92.56 & 90.65 \\
    \midrule

    \multirow{2}{*}{15}
        & 20 & \multicolumn{1}{c|}{3} & 114.60 & 126.61 & 125.20 & 115.20 & 126.86 & 125.05 \\
        & 30 & \multicolumn{1}{c|}{5} & 115.90 & 132.57 & 130.35 & 115.70 & 132.42 & 130.25 \\
    \bottomrule
    \end{tabular}}}
    \caption{Evaluation results for the iterative discrete method ($ID$) and its strengthened variant ($ID_{str}$).}
    \label{A:evaluation_id}
\end{table}

Table~\ref{A:evaluation_id} reports the evaluation results for the iterative discrete approach ($ID$) and its strengthened variant ($ID_{str}$). Overall, both methods produce very similar nominal and adversarial evaluations, with only marginal improvements observed for $ID_{str}$. These improvements reflect improved convergence of the strengthened method, which enables it to reach slightly better solutions or tighter optimality gaps. As discussed in Section~\ref{improved-iterative}, although these approaches are specifically designed to address discrete budgeted uncertainty, their adversarial evaluations (under \emph{Disc.\ BU}) remain comparable to those obtained with the compact formulation for the 15-activity instances with $\Gamma = 5$, which constitute the only configuration solved by all three methods ($C$, $ID$, and $ID_{str}$).

Overall, the evaluation results reported in this appendix complement the analysis presented in the main text by providing a detailed and comprehensive comparison of nominal and robust solution behaviors across all tested configurations. They confirm that explicitly incorporating budgeted uncertainty at the optimization stage leads to solutions that achieve a more favorable balance between nominal performance and robustness, while also highlighting the practical limitations of approaches that rely on purely nominal or fully pessimistic assumptions.

\end{document}